 \renewcommand{\epsilon}{\varepsilon}
\newtheorem{theorem}{Theorem}[section]
 \newtheorem{Example}[theorem]{Example}
 \newtheorem{lemma}[theorem]{Lemma}
 \newtheorem{proposition}[theorem]{proposition}
\newtheorem{deff}[theorem]{Definition}
 \newtheorem{rem}[theorem]{Remark}
 \newcommand{\bth}{\begin{theorem}}
 \newcommand{\ble}{\begin{lemma}}
 \newcommand{\bcor}{\begin{corr}}
 \newcommand{\bdeff}{\begin{deff}}
 \newcommand{\bprop}{\begin{proposition}}
 \newcommand{\ele}{\end{lemma}}
 \newcommand{\ecor}{\end{corr}}
 \newcommand{\edeff}{\end{deff}}
 \newcommand{\eprop}{\end{proposition}}
 \renewcommand{\Pi}{\varPi}
 \renewcommand{\epsilon}{\varepsilon}
\numberwithin{equation}{section}
\thanks{The first author is supported  by National Science Foundation of China(No.11771103) and  Guangxi Natural Science Foundation(No.2017GXNSFFA198017).
The second author is supported by Guangdong Natural Science Foundation(No.2016A030307008).}
\title
[Nonlinear second boundary conditions ]{A convergence result on the second boundary value problem for parabolic equations}
\author{Rongli Huang}
\address{School of Mathematics and Statistics, Guangxi Normal University,
Guilin, Guangxi 541004, People's Republic of China,
 E-mail: ronglihuangmath@gxnu.edu.cn}
 \author{Yunhua Ye}
\address{School of Mathematics, Jiaying University,
Meizhou, Guangdong 514015, People's Republic of China, E-mail: mathyhye@163.com}
\date{}
\begin{document}
\maketitle
\begin{abstract}
We establish a Schn$\ddot{\text{u}}$rer's convergence result and then apply it to obtain the existence of  solutions on the second boundary value problem for a family of special Lagrangian equations.
\end{abstract}

\let\thefootnote\relax\footnote{
2010 \textit{Mathematics Subject Classification}. Primary 53C44; Secondary 53A10.

\textit{Keywords and phrases}. parabolic type special Lagrangian equation; special Lagrangian  diffeomorphism; special Lagrangian  graph.}
\section{Introduction}
Let $\Omega$ be a bounded domain with smooth boundary in $\mathbb{R}^{n}$ and $\mathcal{S}$ be the open connected subset of $\mathbb{S}_{n}$ where
$$  \mathbb{S}_{n}=\{n\times n\,\, \text{real\,\,symmetric\,\,matrix}\}.$$
Given $u_{0}: \Omega\rightarrow\mathbb{R}$,
we consider nonlinear parabolic equation with second boundary condition:
\begin{equation}\label{e1.1}
\left\{ \begin{aligned}u_{t}-F(D^{2}u,Du,x)&=0,
&  x\in \Omega,\,\, t>0, \\
h(Du,x)&=0,& x\in \partial\Omega,\,\, t>0,\\
u&=u_{0},& x\in \Omega,\,\, t=0,
\end{aligned} \right.
\end{equation}
where $F: \mathcal{S}\times \mathbb{R}^{n}\times\Omega\rightarrow \mathbb{R}$ is $C^{2+\alpha}$ for some $0<\alpha<1$ and
satisfies
\begin{equation}\label{e1.2}
A<B\Rightarrow F(A,p,x)<F(B,p,x).
\end{equation}
In other words, (\ref{e1.2}) means that $F$ is non-decreasing with respect to the matrix variable and we say that
$F$ is strictly parabolic. Here $h:\mathbb{R}^{n}\times\partial\Omega\rightarrow \mathbb{R}$ is $C^{1}$ and satisfies
\begin{equation}\label{e1.3}
\sum_{k=1}^{n}h_{p_{k}}(\cdot,x)\nu_{k}>0,
\end{equation}
 where $\nu=(\nu_{1},\nu_{2}, \cdots,\nu_{n})$ is the unit inward normal vector of $\partial\Omega$.
 (\ref{e1.3}) shows that $h(Du,x)=0$ is  oblique.

Throughout the following,  Einstein's convention of
 summation over repeated indices will be adopted.
Denote $$u_{i}=\dfrac{\partial u}{\partial x_{i}},
u_{ij}=\dfrac{\partial^{2}u}{\partial x_{i}\partial x_{j}},
u_{ijk}=\dfrac{\partial^{3}u}{\partial x_{i}\partial x_{j}\partial
x_{k}}, \cdots $$ and
$$[u^{ij}]=[ u_{ij}]^{-1},\,\,\,
F^{ij}(D^{2}u)=\frac{\partial F}{\partial u_{ij}},\,\,\,h_{p_{k}}(Du)=\frac{\partial h}{\partial u_{k}}, \,\,\, \Omega_{T}=\Omega\times(0,T). $$

In this paper, we are concerned with the convergence of nonlinear parabolic equation \eqref{e1.1} to translating solution under some a-priori estimates as $t\rightarrow \infty$.
The translating solutions are intimately related to the solutions of minimal Lagrangian diffeomorphism problem. In general,
evolution equations often have special solutions called solitons which keep their shape during the evolution. For examples, two very important classes of solitons
in mean curvature flow are self-shrinker and translating solutions which evolve by a homothety or a translation respectively.  Translating solutions are interesting examples
of the evolution equations since they are precise solutions in the sense that their evolution is known, which is very hard to determine
in general.
Our main result concerning the asymptotic behavior of nonlinear parabolic equation \eqref{e1.1}
under certain assumptions on a-priori estimates can be summarized as follows.

\begin{theorem}\label{t1.1}
For any $T>0$, we assume that  $u\in C^{4+\alpha,\frac{4+\alpha}{2}}(\bar{\Omega}_{T})$
be a unique solution of the nonlinear parabolic equation (\ref{e1.1}) which satisfy
\begin{equation}\label{e1.4}
\|u_{t}(\cdot,t)\|_{C(\bar{\Omega})}+\|Du(\cdot,t)\|_{C(\bar{\Omega})}+\|D^{2}u(\cdot,t)\|_{C(\bar{\Omega})}\leq C_{1},
\end{equation}
\begin{equation}\label{e1.40}
\|D^{2}u(\cdot,t)\|_{C^{\alpha}(\bar{D})}\leq C_{2},\quad \forall D\subset\subset\Omega,
\end{equation}
\begin{equation}\label{e1.5}
\sum_{k=1}^{n}h_{p_{k}}(Du(\cdot,t),x)\nu_{k}\geq \frac{1}{C_{3}},
\end{equation}
where the positive constants $C_{1}$, $C_{2}$ and $C_{3}$  are independent of ~ $t\geq 1$.
Then  $u(\cdot,t)$ converges to a function $u^{\infty}(x,t)=\tilde{u}^\infty(x)+C_{\infty}\cdot t$
in $C^{1+\zeta}(\bar{\Omega})\cap C^{4+\alpha'}(\bar{D})$ as $t\rightarrow\infty$
  for any $D\subset\subset\Omega$, $\zeta<1$ and $\alpha'<\alpha$, that is
 $$\lim_{t\rightarrow+\infty}\|u(\cdot,t)-u^{\infty}(\cdot,t)\|_{C^{1+\zeta}(\bar{\Omega})}=0,\qquad
  \lim_{t\rightarrow+\infty}\|u(\cdot,t)-u^{\infty}(\cdot,t)\|_{C^{4+\alpha'}(\bar{D})}=0.$$
And $\tilde{u}^{\infty}(x)\in C^{2}(\bar{\Omega})$ is a solution of
\begin{equation}\label{e1.6}
\left\{ \begin{aligned}F(D^{2}u,Du,x)&=C_{\infty},
&  x\in \Omega, \\
h(Du,x)&=0, &x\in\partial\Omega.
\end{aligned} \right.
\end{equation}
The constant $C_{\infty}$ depends only on $\Omega$ and $F$. The solution to (\ref{e1.6}) is unique up to additions of constants.
\end{theorem}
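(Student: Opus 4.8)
The plan is to follow the Schn\"urer scheme: first show $u_t(\cdot,t)$ converges uniformly to a constant $C_\infty$; then identify the subsequential limits of the flow as translating solitons; then use a comparison argument to get genuine (not just subsequential) convergence of $u(\cdot,t)-C_\infty t$; and finally bootstrap the regularity of the convergence from the a-priori bounds. \emph{Step 1 (decay of the oscillation of $u_t$).} Differentiating \eqref{e1.1} in $t$, the function $\omega:=u_t$ solves the linear parabolic equation $\omega_t=F^{ij}(D^2u,Du,x)\omega_{ij}+F_{p_k}(D^2u,Du,x)\omega_k$ in $\Omega$ (here $F_{p_k}=\partial F/\partial p_k$), together with the homogeneous oblique condition $h_{p_k}(Du,x)\omega_k=0$ on $\partial\Omega$, obliqueness being guaranteed by \eqref{e1.5}. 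By the maximum principle and Hopf's lemma for oblique problems, $M(t):=\max_{\bar\Omega}\omega(\cdot,t)$ is non-increasing and $m(t):=\min_{\bar\Omega}\omega(\cdot,t)$ is non-decreasing; by \eqref{e1.4} they are bounded, so $M(t)\to M_\infty\ge m_\infty\leftarrow m(t)$. Fix $t_k\to\infty$, $x_0\in\Omega$, and set $v_k(x,t):=u(x,t+t_k)-u(x_0,t_k)$. By \eqref{e1.4} and \eqref{e1.40} the $v_k$ are bounded in $C^{1,1}(\bar\Omega)$ uniformly in $t$ and, via interior parabolic Schauder estimates, in $C^{2,\beta}_{\mathrm{loc}}(\Omega\times(0,\infty))$; a subsequence converges in $C^1_{\mathrm{loc}}(\bar\Omega\times[0,\infty))\cap C^{2,\beta}_{\mathrm{loc}}(\Omega\times(0,\infty))$ to a solution $v_\infty$ of $\partial_t v=F(D^2v,Dv,x)$ with $h(Dv_\infty,x)=0$ on $\partial\Omega$. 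Since $\partial_t v_k=u_t(\cdot,\cdot+t_k)$ has spatial maximum $M(\cdot+t_k)\to M_\infty$ and spatial minimum $m(\cdot+t_k)\to m_\infty$, we get $m_\infty\le\partial_t v_\infty\le M_\infty$, and choosing $x_k$ with $u_t(x_k,t_k+1)=M(t_k+1)$, $x_k\to\bar x$, gives $\partial_t v_\infty(\bar x,1)=M_\infty$. The strong maximum principle (if $\bar x\in\Omega$) or Hopf's lemma combined with $h_{p_k}(Dv_\infty,x)\partial_k(\partial_t v_\infty)=0$ (if $\bar x\in\partial\Omega$) forces $\partial_t v_\infty\equiv M_\infty$ on $\Omega\times(0,1]$; evaluating at $t=\tfrac12$ and using $m(t_k+\tfrac12)\to m_\infty$ yields $m_\infty=M_\infty=:C_\infty$. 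Hence $u_t(\cdot,t)\to C_\infty$ uniformly on $\bar\Omega$, and every subsequential limit has the form $v_\infty(x,t)=w(x)+C_\infty t$ with $w\in C^2(\Omega)\cap C^{1,\zeta}(\bar\Omega)$ solving \eqref{e1.6}; in particular \eqref{e1.6} is solvable with this $C_\infty$.

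\emph{Step 2 (uniqueness for \eqref{e1.6} and independence of $C_\infty$).} Let $w_1,w_2$ solve \eqref{e1.6} with constants $C_\infty^1,C_\infty^2$. A maximum point of $w_1-w_2$ over $\bar\Omega$ that lies on $\partial\Omega$ forces, via Hopf's lemma and the mean-value form $\big(\int_0^1 h_{p_k}(sDw_1+(1-s)Dw_2,x)\,ds\big)\partial_k(w_1-w_2)=0$ of the boundary condition (which is still oblique by \eqref{e1.3}), that $w_1-w_2$ is constant; otherwise the maximum is interior, where $Dw_1=Dw_2$ and $D^2w_1\le D^2w_2$, so \eqref{e1.2} gives $C_\infty^1\le C_\infty^2$, and symmetrically $C_\infty^2\le C_\infty^1$. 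With $C_\infty^1=C_\infty^2$, the difference solves a linear elliptic equation with positive leading coefficients plus the oblique condition, so the strong maximum principle and Hopf's lemma give $w_1-w_2\equiv$ const. Thus the admissible constant in \eqref{e1.6} is unique; since running the flow from a different initial datum would again produce, by Step 1, a solution of \eqref{e1.6}, that constant must be the same $C_\infty$, which therefore depends only on $\Omega$ and $F$.

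\emph{Step 3 (convergence of $u-C_\infty t$, and upgrading).} Fix a solution $w$ of \eqref{e1.6} and put $g(x,t):=u(x,t)-C_\infty t$; then $g$ and the stationary function $w$ both solve $\phi_t=F(D^2\phi,D\phi,x)-C_\infty$ with $h(D\phi,x)=0$, so by the comparison principle $\alpha(t):=\max_{\bar\Omega}(g(\cdot,t)-w)$ is non-increasing and $\beta(t):=\min_{\bar\Omega}(g(\cdot,t)-w)$ is non-decreasing. Because $|Dg(\cdot,t)|=|Du(\cdot,t)|\le C_1$ gives $\alpha(t)-\beta(t)\le C_1\,\mathrm{diam}\,\Omega$, both converge to finite limits $\alpha_\infty\ge\beta_\infty$, and $g(x_0,t)-w(x_0)\in[\beta(1),\alpha(1)]$ stays bounded. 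Choose $t_k\to\infty$ with $g(x_0,t_k)\to L\in\mathbb{R}$; applying the compactness of Step 1 to $g$ (whose $t$-derivative tends to $0$), a subsequence of $g(\cdot,t_k)-g(x_0,t_k)$ converges uniformly to a solution $w^\ast$ of \eqref{e1.6} with $w^\ast(x_0)=0$, so $g(\cdot,t_k)\to w^\ast+L$ uniformly; by Step 2, $w^\ast+L=w+L'$ for a constant $L'$, hence $\alpha(t_k),\beta(t_k)\to L'$ and so $\alpha_\infty=\beta_\infty=L'$. Therefore $g(\cdot,t)-w\to L'$ uniformly, i.e.\ $u(\cdot,t)-C_\infty t\to\tilde u^\infty:=w+L'$ in $C^0(\bar\Omega)$, and $\tilde u^\infty$ solves \eqref{e1.6}. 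Finally, \eqref{e1.4} bounds $\{u(\cdot,t)-C_\infty t\}_{t\ge1}$ in $C^{1,1}(\bar\Omega)$, so it is precompact in $C^{1+\zeta}(\bar\Omega)$ for $\zeta<1$ and, the $C^0$-limit being unique, converges there to $\tilde u^\infty$ (in particular $\tilde u^\infty\in C^{1,1}(\bar\Omega)$, and boundary regularity for the oblique problem \eqref{e1.6} promotes it to $C^2(\bar\Omega)$); on each $D\subset\subset\Omega$, differentiating \eqref{e1.1} and using \eqref{e1.4}, \eqref{e1.40} with interior parabolic Schauder estimates bounds $\{u(\cdot,t)-C_\infty t\}_{t\ge1}$ in $C^{4+\alpha}(\bar D)$ uniformly in $t$, whence convergence to $\tilde u^\infty$ in $C^{4+\alpha'}(\bar D)$ for $\alpha'<\alpha$. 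This gives all the assertions, including uniqueness of the solution of \eqref{e1.6} up to constants.

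\emph{Main obstacle.} The delicate point is Step 1: producing a limit flow $v_\infty$ regular enough \emph{up to $\partial\Omega$} to legitimately invoke the strong maximum principle and, above all, Hopf's lemma for the oblique condition, when the only uniform interior estimate is \eqref{e1.40}. Controlling the boundary behaviour of $v_\infty$ (and, relatedly, the boundary regularity of $\tilde u^\infty$) is exactly where the obliqueness \eqref{e1.3}, \eqref{e1.5} must be exploited with care; the comparison/monotonicity steps and the interpolation for the final convergence are then routine.
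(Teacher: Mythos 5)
Your Steps 2 and 3 are sound and in fact streamline the paper's ending: the paper reaches full convergence through a rather heavy construction (time steps $\tau_k=\frac{1}{Np_k}$ indexed by primes, a diagonal limit, and denseness of ratios of primes in Lemmas 2.6--2.9), whereas your comparison of $g=u-C_\infty t$ with a fixed solution $w$ of \eqref{e1.6}, plus the uniqueness argument via the strong maximum principle and Hopf's lemma, gets there more directly once a single translating limit is in hand. You also supply a proof of the uniqueness statements in the theorem, which the paper only asserts.

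The genuine gap is in Step 1, and it is not merely the technical point you flag at the end --- the route itself is blocked by the hypotheses. You work with $\omega=u_t$ and need the boundary-inclusive quantities $M(t)=\max_{\bar\Omega}u_t(\cdot,t)$ and $m(t)=\min_{\bar\Omega}u_t(\cdot,t)$ to pass to the interior limit flow: you choose $x_k$ with $u_t(x_k,t_k+1)=M(t_k+1)$ and claim $\partial_t v_\infty(\bar x,1)=M_\infty$, and later you need $\min_{\bar\Omega}u_t(\cdot,t_k+\tfrac12)\to\inf_\Omega\partial_t v_\infty(\cdot,\tfrac12)$. But $u_t=F(D^2u,Du,x)$, and \eqref{e1.40} controls $D^2u$ in $C^\alpha$ only on compact subsets $D\subset\subset\Omega$; near $\partial\Omega$ the second derivatives are merely bounded by \eqref{e1.4}, with no equicontinuity. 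So $u_t(\cdot,t+t_k)$ converges to $\partial_t v_\infty$ only locally uniformly in $\Omega$, the maximizing/minimizing points may escape to $\partial\Omega$, and neither $M_\infty$ nor $m_\infty$ is seen by the limit flow. (If $\bar x\in\partial\Omega$ the situation is worse still: invoking Hopf's lemma for $\partial_t v_\infty$ there requires $h_{p_k}(Dv_\infty,x)\partial_k(\partial_t v_\infty)$ to make sense, i.e.\ third derivatives of $v_\infty$ up to the boundary, which nothing in \eqref{e1.4}--\eqref{e1.5} provides.) Note moreover that the conclusion you are after, $u_t(\cdot,t)\to C_\infty$ uniformly on $\bar\Omega$, is strictly stronger than what the theorem asserts (only $C^{1+\zeta}(\bar\Omega)$ convergence of $u-C_\infty t$ globally, with higher-order convergence only on compact subsets), so it cannot be expected to follow from these hypotheses. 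The paper avoids exactly this trap by replacing $u_t$ with the zeroth-order difference quotient $w(x,t)=u(x,t)-u(x,t+t_0)$ (Lemmas 2.1--2.4): its oscillation over $\bar\Omega$ is controlled by the $C^{1+\zeta}(\bar\Omega)$ compactness of the time-translates, which \emph{is} uniform up to the boundary, so the oscillation passes to the limit flow and the strong maximum principle/Hopf contradiction can be run there. If you rebuild Step 1 on $u(\cdot,t)-u(\cdot,t+t_0)$ --- showing its oscillation tends to zero for each small $t_0$, so that subsequential limit flows satisfy $v_\infty(x,t+t_0)-v_\infty(x,t)=c(t_0)$ with $c$ additive and bounded, hence $c(t_0)=C_\infty t_0$ --- your Steps 2 and 3 then complete the proof as written.
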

\begin{rem}\label{r1.1}
By Evans-Krylov theorem for parabolic equations \cite{W},
we can replace (\ref{e1.40}) by  $F(A,p,x)$ being concave with respect to the variable $A$.
\end{rem}

We first recall some results concerning the convergence of solutions to translating solutions in history.
Let $\Omega$ be a smooth strictly convex domain in $\mathbb{R}^{n}$. Schn$\ddot{\text{u}}$rer \cite{OC} studied a class of curvature flow in $\mathbb{R}^{n+1}$:
\begin{equation}\label{e1.7}
\left\{ \begin{aligned}\dot{X}&=-(\ln F-\ln f)\nu, \\
\nu(M)&=\nu(M_{0}),\\
M\mid_{t=0}&=M_{0},
\end{aligned} \right.
\end{equation}
where $X$ denote the embedding vector of a smooth strictly convex hypersurface with boundary, $M=\text{graph}\, u\mid_{\Omega},$
$u: \bar{\Omega}\rightarrow \mathbb{R}$, and $\dot{X}$ denote the total time derivative. By a given smooth positive function
$f:\bar{\Omega}\rightarrow \mathbb{R}$ and a curvature function $F$, Schn$\ddot{\text{u}}$rer
 transformed  the curvature flow (\ref{e1.7}) into some sort of (\ref{e1.1})
and then obtained the estimates (\ref{e1.4}), (\ref{e1.5}). Finally he proved that the initial value problem (\ref{e1.7})
admits a convex solution $M(t)=\text{graph}\, u(t)\mid_{\Omega}$ that exists for all times $t\geq 0$ and converges smoothly
to a translating solution  $M^{\infty}=\text{graph}\, u^{\infty}\mid_{\Omega}$ of the flow (\ref{e1.7}), that is, there exists
$v^{\infty}\in\mathbb{R}$ such that $$u^{\infty}(x,t)= u^{\infty}(x,0)+v^{\infty}\cdot t.$$

A similar convergence result for graphic mean curvature flow with Neumann boundary condition in arbitrary dimension
was studied by a recent work of Ma-Wang-Wei \cite{MWW}.
They studied nonparametric surfaces over strictly convex bounded domains in $\mathbb{R}^n$
which are evolving by the mean curvature flow with Neumann boundary value
\begin{equation*}
\left\{ \begin{aligned}u_t &= (\delta_{ij}-\frac{u_i u_j}{1+|Du|^2})u_{ij} \quad \text{in} \qquad \Omega\times (0,\infty), \\
u_{\nu} & = \varphi(x) \quad \text{on} \qquad \partial \Omega\times (0,\infty),\\
 u (x,0)& = u_{0}(x),
\end{aligned} \right.
\end{equation*}
where $\varphi(x)$ and $u_0(x)$ are smooth functions satisfying $u_{0,\nu}=\varphi(x)$ on $\partial \Omega$.
They proved a convergence theorem that the solution of the above mean curvature flow with Neumann boundary value converges
to a translating solution which moves at a constant speed up to a translation $\lambda t+ w $ in arbitrary dimensions,
where $(\lambda, w)$ are suitable solution to
\begin{equation*}
\left\{ \begin{aligned}  (\delta_{ij}-\frac{u_i u_j}{1+|Du|^2})u_{ij} &= \lambda \quad \text{in} \qquad \Omega, \\
u_{\nu} & = \varphi(x) \quad \text{on} \qquad \partial \Omega.
\end{aligned} \right.
\end{equation*}

Our work was also inspired by the reading of the  papers of  Altschuler-Wu \cite{SL}, and Kitagawa \cite{JK}
where the translating solutions were obtained.
In this paper we show Schn$\ddot{\text{u}}$rer's convergence result to more general case
and discuss the application of this convergence result
in the study of minimal Lagrangian diffeomorphism problem. For the proof of Theorem \ref{t1.1},
we  borrow the ideas from \cite{OC}, \cite{JK} and \cite{SL}.

The rest part of this paper is organized as follows. We give a simple case which is the convergence of the solution
to one-dimensional heat equation to illustrate the convergence in Theorem \ref{t1.1}.
We collect several preliminary results in section 3.  In section 4, we give the proof of Theorem \ref{t1.1}.
In section 5, we exhibit the applications of  Theorem \ref{t1.1} to the study of  a family of special Lagrangian graphs.

\section{A simple case}
In this section, we first give a simple case to illustrate the convergence result in Theorem \ref{t1.1}.
We consider the convergence of the solutions to the following heat equation on the interval $[0,1]$.
 \begin{equation}\label{heat}
\left\{ \begin{aligned}u_t &= u_{xx} \quad \text{for}\quad x \in [0,1],\quad t>0, \\
u_x|_{x=0} & = a ,  \quad  u_x|_{x=1} = b  ,\\
 u (x,0)& = u_{0}(x),
\end{aligned} \right.
\end{equation}
where $a$ and $b$ are two constants and $u_0(x)$ is a smooth function on the interval $[0,1]$. In physical meaning,
the model characterizes the heat conduction problem of a fine iron rod in the interval [0,1]
with heat exchanges on the two interval endpoints $0$ and $1$.
By principle of superposition in classical linear PDE theory,
we know that the solution of equation \eqref{heat} can be decomposed into three parts
\begin{equation*}
u(x,t)=V(x)+Z(t)+w(x,t),
\end{equation*}
 where $V(x)$ , $Z(t)$ and  $w(x,t)$ are solutions of the following equations respectively:
 \begin{equation}\label{heat1}
\left\{ \begin{aligned}V_{xx} &= b-a \quad \text{for}\quad x\in [0,1], \\
V_x|_{x=0} & = a ,  \quad  V_x|_{x=1} = b  ,
\end{aligned} \right.
\end{equation}

 \begin{equation}\label{heat2}
\left\{ \begin{aligned}Z_{t} &= b-a \quad \text{for}\quad t>0, \\
& Z|_{t=0} = 0,
\end{aligned} \right.
\end{equation}
and
 \begin{equation}\label{heat3}
\left\{ \begin{aligned}w_t &= w_{xx} \quad \text{for}\quad x\in [0,1],\quad t>0, \\
w_x|_{x=0} & = 0 ,  \quad  w_x|_{x=1} = 0  ,\\
 w (x,0)& = u_{0}(x)-(\frac{b-a}{2}x^2+ax).
\end{aligned} \right.
\end{equation}
By solving the equations \eqref{heat1} and \eqref{heat2}, we obtain that
\begin{equation}\label{sol1}
V(x)=\frac{b-a}{2}x^2+ax
\end{equation}
 and
 \begin{equation}\label{sol2}
 Z(t)=(b-a)t.
\end{equation}
Then it is left to solve the equation \eqref{heat3}. To that end,
we use the methods of separation of variables. We suppose the equation \eqref{heat3} has
the solutions in the following form
\begin{equation}\label{sep}
w(x,t)=X(x)T(t).
\end{equation}
Substituting \eqref{sep} into equation \eqref{heat3}, we obtain:
 \begin{equation}\label{heat4}
\frac{X''(x)}{X(x)}=\frac{T'(t)}{T(t)}=-\lambda,
\end{equation}
where $\lambda$ is a constant which will be determined later.
From equation \eqref{heat4}, we need to solve
\begin{equation}\label{heat5}
\left\{ \begin{aligned} & X''(x)+\lambda X(x) = 0 ,\quad \text{for}\quad x\in [0,1], \\
&X_x|_{x=0}= 0 ,  \quad  X_x|_{x=1} = 0  ,
\end{aligned} \right.
\end{equation}
and
\begin{equation}\label{heat6}
T'(t)+\lambda T(t) = 0 ,\quad \text{for}\quad t>0.
\end{equation}

For equation \eqref{heat5}, the eigenvalues $\lambda_n$ and corresponding eigenfunctions are $n^2\pi^2$ and $X_n(x)=C_n cos n\pi x$
for $n=0,1,2, \cdot\cdot\cdot$. For $\lambda_n$, the solutions of equation \eqref{heat6} are
 $T_n(t) = D_n e^{-n^2\pi^2 t}$ for $n=0,1,2, \cdot\cdot\cdot$. From the method of separation of variables, we know that the solution of
 equation \eqref{heat3} has the form of Fourier series:
 \begin{equation}\label{sol3}
 w(x,t)=\sum_{n=0}^{\infty} C_n e^{-n^2\pi^2 t}cos n\pi x.
 \end{equation}
 Using the orthogonality of the eigenfunctions and initial value of equation \eqref{heat3},
 we know $w(x,0)=u_0(x)-(\frac{b-a}{2}x^2+ax)$ and hence we can
 determine the coefficient constants as follows:
 \begin{equation*}
 C_n = \int_{0}^1 [u_0(x)-(\frac{b-a}{2}x^2+ax)]cos n\pi x dx,
 \end{equation*}
 for $n=0,1,2, \cdot\cdot\cdot$.
 Combining \eqref{sol1}, \eqref{sol2} and \eqref{sol3}, we know the solution to the equation \eqref{heat} is
 \begin{equation*}
 u(x,t)= (b-a)t+\frac{b-a}{2}x^2+ax + \sum_{n=0}^{\infty} C_n e^{-n^2\pi^2 t}cos n\pi x.
 \end{equation*}
 If we denote the speed constant $C_{\infty}=b-a$, Then the solution can be reformulated as
 \begin{equation*}
 u(x,t)-C_{\infty} t = \frac{b-a}{2}x^2+ax + \sum_{n=0}^{\infty} C_n e^{-n^2\pi^2 t}cos n\pi x.
 \end{equation*}
 By using the exponential decay of the exponential functions,
 we deduce that the function $u(x,t)-C_{\infty} t$
 is asymptotically approaching $\tilde{u}(x) \triangleq V(x) = \frac{b-a}{2}x^2+ax  $ up to a constant $C_0$
 and the limit function $\tilde{u}(x) $ satisfies
  \begin{equation*}
\left\{ \begin{aligned}V_{xx} &= b-a \triangleq C_{\infty} \quad \text{for}\quad x\in [0,1], \\
V_x|_{x=0} & = a ,  \quad  V_x|_{x=1} = b.
\end{aligned} \right.
\end{equation*}
Therefore we have completed the proof that the solution of the heat equation \eqref{heat} $u(x,t)$ converges to a function $u^{\infty}(x,t)=\tilde{u}^\infty(x)+C_{\infty}\cdot t$
as $t\rightarrow\infty$.

\section{Preliminary results}
In this section, we collect several preliminary results which will be used to prove
Theorem \ref{t1.1} in the next section.

\begin{lemma}\label{l2.1}
For any $T>0$, we assume that $a^{ij}\in C(\bar{\Omega}_{T})$, $b^{i}\in C(\bar{\Omega}_{T})$ and $[a^{ij}]|_{\bar{\Omega}_{T}}>0$.
If $w\in C^{2,1}(\bar{\Omega}_{T})$ satisfies
\begin{equation}\label{e2.1}
\left\{ \begin{aligned}w_{t}-a^{ij}w_{ij}-b^{i}w_{i}&=0,
&  x\in \Omega,\,\, t>0, \\
\beta^{k}w_{k}&=0,& x\in \partial\Omega,\,\, t>0,\\
\end{aligned} \right.
\end{equation}
where $\beta=(\beta^{1},\beta^{2},\cdots,\beta^{n})$ is a uniformly strictly oblique vector field,
that is, $\beta^{k}\nu_{k}\geq \frac{1}{C}$. Then
\begin{equation*}
osc_{w}(t)\triangleq \max_{x\in \bar{\Omega}}w(x,t)-\min_{x\in \bar{\Omega}}w(x,t)
\end{equation*}
is a strictly decreasing function or $w$ is a constant
\end{lemma}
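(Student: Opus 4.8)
The plan is to deduce everything from the strong maximum principle and the parabolic Hopf boundary point lemma for the operator $L:=\partial_t-a^{ij}\partial_i\partial_j-b^i\partial_i$. Since $a^{ij},b^i\in C(\bar\Omega_T)$ and $[a^{ij}]>0$ on the compact set $\bar\Omega_T$, the operator $L$ is uniformly parabolic with bounded coefficients and has no zeroth order term, so these tools apply to $w$ on every slab $\bar\Omega\times[t_1,t_2]$, and every constant solves $Lw=0$ together with $\beta^k w_k=0$.

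First I would show that $M(t):=\max_{\bar\Omega}w(\cdot,t)$ is non-increasing and $m(t):=\min_{\bar\Omega}w(\cdot,t)$ is non-decreasing; then $osc_w(t)=M(t)-m(t)$ is automatically non-increasing. Fix $0\le t_1<t_2\le T$ and suppose for contradiction that $M^*:=\max_{\bar\Omega\times[t_1,t_2]}w>M(t_1)$. Let $t_0$ be the first time in $[t_1,t_2]$ at which $w(\cdot,t)$ attains the value $M^*$; by continuity $t_0>t_1$ and $w<M^*$ on $\bar\Omega\times[t_1,t_0)$. Pick $x_0\in\bar\Omega$ with $w(x_0,t_0)=M^*$. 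If $x_0$ were interior, the strong maximum principle on $\Omega\times(t_1,t_0]$ would force $w\equiv M^*$ on $\bar\Omega\times[t_1,t_0]$, contradicting $w(\cdot,t_1)\le M(t_1)<M^*$; the same argument also shows $w<M^*$ throughout $\Omega\times(t_1,t_0]$. Hence $x_0\in\partial\Omega$, and the parabolic Hopf lemma (the interior sphere condition is available since $\partial\Omega$ is smooth) gives that the outward normal derivative of $w(\cdot,t_0)$ at $x_0$ is strictly positive, i.e. $\nu_k w_k(x_0,t_0)<0$. On the other hand $x_0$ is a maximum point of the $C^2$ function $w(\cdot,t_0)$ over $\bar\Omega$ lying on $\partial\Omega$, so its tangential gradient there vanishes and $Dw(x_0,t_0)=\big(\nu_k w_k(x_0,t_0)\big)\nu$; therefore
\[
\beta^k w_k(x_0,t_0)=\big(\nu_k w_k(x_0,t_0)\big)\big(\beta^k\nu_k\big)<0,
\]
because $\beta^k\nu_k\ge 1/C>0$, contradicting the boundary condition $\beta^k w_k=0$. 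Thus $M^*=M(t_1)$ and in particular $M(t_2)\le M(t_1)$; applying the same reasoning to $-w$, which solves a problem of the same form, shows $m$ is non-decreasing.

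For the dichotomy, suppose $osc_w$ is not strictly decreasing; being non-increasing, it satisfies $osc_w(t_1)=osc_w(t_2)$ for some $0\le t_1<t_2\le T$, and together with $M(t_2)\le M(t_1)$ and $m(t_2)\ge m(t_1)$ this forces $M(t_1)=M(t_2)=:M^*$ and $m(t_1)=m(t_2)=:m^*$. By the monotonicity of $M$ and $m$ we then have $m^*\le w\le M^*$ on the whole slab $\bar\Omega\times[t_1,t_2]$, while $w(\cdot,t_2)$ attains both values. If $M^*>m^*$, take a point $(x_1,t_2)$ with $w(x_1,t_2)=M^*$: were $x_1$ interior, the strong maximum principle would give $w\equiv M^*$ on the slab, hence $m(t_2)=M^*\ne m^*$, impossible; the same argument shows $w<M^*$ in $\Omega\times(t_1,t_2]$, so $x_1\in\partial\Omega$, and exactly as above the Hopf lemma together with the vanishing of the tangential gradient of $w(\cdot,t_2)$ at $x_1$ yields $\beta^k w_k(x_1,t_2)<0$, a contradiction. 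Hence $M^*=m^*$, so $w(\cdot,t_1)$ is the constant $M^*$; uniqueness for the forward initial–boundary value problem (the constant $M^*$ being a solution) gives $w\equiv M^*$ on $\bar\Omega\times[t_1,T]$, and backward uniqueness for $L$ extends this to all of $\bar\Omega_T$ (only the forward statement is used in the sequel). This proves that either $osc_w$ is strictly decreasing or $w$ is constant.

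The hard part is the boundary analysis in the two steps above: one must rule out the appearance of a new spatial maximum or minimum of $w$ on $\partial\Omega$, and this is precisely where uniform obliqueness $\beta^k\nu_k\ge 1/C$ is essential — it converts the strict sign of the normal derivative produced by the Hopf lemma into a strict sign of $\beta^k w_k$, in contradiction with the homogeneous oblique condition. Everything else is routine: continuity of the coefficients on the compact set $\bar\Omega_T$ makes $L$ uniformly parabolic, so the classical strong maximum principle and parabolic Hopf lemma apply verbatim, and smoothness of $\partial\Omega$ supplies the interior sphere condition.
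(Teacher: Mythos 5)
Your proof is correct and follows essentially the same route as the paper's: the strong maximum principle rules out new interior extrema, and the parabolic Hopf lemma combined with the uniform obliqueness $\beta^k\nu_k\ge 1/C$ (which converts the strict sign of the normal derivative into a strict sign of $\beta^k w_k$, contradicting the homogeneous boundary condition) rules out boundary extrema, yielding monotonicity of $\max_{\bar\Omega}w(\cdot,t)$ and $\min_{\bar\Omega}w(\cdot,t)$ and hence the dichotomy. Your version is somewhat more detailed in organizing the contradiction (first-time argument, then the constancy case), but the underlying ideas are identical to the paper's.
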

\begin{proof}
Suppose $w$ is not a constant, then we can prove that $\min_{x \in \Omega}w(x,t)$ is a strictly increasing function and
$\max_{x \in \Omega}w(x,t)$ is a strictly decreasing function with respect to $t$.
In fact, since $w(x,t)$ is a solution of the parabolic equation \eqref{e2.1},
then for any $t>t_0$, by using the strong maximum principles \cite{W}, we obtain
\begin{equation*}
w(x,t)>\min_{(x,t)\in \partial \Omega\times[t_0,T)\cup \Omega\times \{t_0\}}w(x,t),\,\, \forall x\in \Omega,\,\, t_0<t<T.
\end{equation*}
By the boundary condition of equation \eqref{e2.1} and the Hopf Lemma, we know that $w(x,t)$
can not attain its minimum on the side boundary $\partial \Omega\times [t_0,T)$, then we deduce
 \begin{equation*}
w(x,t)>\min_{x\in \Omega}w(x,t_0),\,\, \forall x\in \Omega,\,\, t>t_0.
\end{equation*}
Taking the minimum of $x\in \Omega$, we deduce that
\begin{equation}\label{e2.1a}
\min_{x\in \Omega} w(x,t)>\min_{x\in \Omega}w(x,t_0),\,\, \forall t>t_0.
\end{equation}
Similarly, we have
\begin{equation}\label{e2.1b}
\max_{x\in \Omega} w(x,t)<\max_{x\in \Omega}w(x,t_0),\,\, \forall t>t_0.
\end{equation}
Combining \eqref{e2.1a} and \eqref{e2.1b}, we deduce that for $t>t_0$,
\begin{equation*}
osc_{w}(t)\triangleq \max_{x\in \bar{\Omega}}w(x,t)-\min_{x\in \bar{\Omega}}w(x,t)
<\max_{x\in \bar{\Omega}}w(x,t_0)-\min_{x\in \bar{\Omega}}w(x,t_0)=osc_{w}(t_0),
\end{equation*}
which shows that $osc_{w}(t)$ is a strictly decreasing function.
\end{proof}

\begin{lemma}\label{l2.2}
For any sequence $\{t_{n}\} \,\, (t_{n}\rightarrow +\infty)$ and for $x_{0}\in \bar{\Omega}$ and $t_{0}>0$  fixed, there exists a subsequence of $\{t_{n}\}$ (again denoted by itself), such that
\begin{equation*}
u(x,t+t_{n})-u(x_{0},t_{n}) \quad \text{and}\quad u(x,t+t_{0}+t_{n})-u(x_{0},t_{0}+t_{n})
\end{equation*}
 converge to two functions which we denote by $u^{\infty}$ and $u^{t_{0},\infty}$ respectively  in the following sense,
\begin{equation}\label{e2.2}
\begin{aligned}
 &\lim_{n\rightarrow+\infty}\|u(x,t+t_{n})-u(x_{0},t_{n})-u^{\infty}(x,t)\|_{C^{1+\zeta}(\bar{\Omega})}=0,\\
 &\lim_{n\rightarrow+\infty}\|u(x,t+t_{0}+t_{n})-u(x_{0},t_{0}+t_{n})-u^{t_{0},\infty}(x,t)\|_{C^{1+\zeta}(\bar{\Omega})}=0,\\
 &\lim_{n\rightarrow+\infty}\|u(x,t+t_{n})-u(x_{0},t_{n})-u^{\infty}(x,t)\|_{C^{2}(\bar{D})}=0,\\
 &\lim_{n\rightarrow+\infty}\|u(x,t+t_{0}+t_{n})-u(x_{0},t_{0}+t_{n})-u^{t_{0},\infty}(x,t)\|_{C^{2}(\bar{D})}=0,\\
 \end{aligned}
\end{equation}
where $D\subset\subset\Omega$, $0<\zeta<1$.
\end{lemma}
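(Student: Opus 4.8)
The plan is to run an Arzel\`a--Ascoli and diagonal compactness argument on the shifted functions, the compactness coming from the $t$-uniform estimates (\ref{e1.4}) and (\ref{e1.40}) and the control of the time variable from the Lipschitz bound $|u_t|\le C_1$ in (\ref{e1.4}). First I would set $v_n(x,t):=u(x,t+t_n)-u(x_0,t_n)$. Fix $M>0$; since $t_n\to+\infty$, as soon as $t_n\ge M+1$ one has $t+t_n\ge 1$ for all $t\in[-M,M]$, so (\ref{e1.4}) and (\ref{e1.40}) are available for $u(\cdot,t+t_n)$. From $|Dv_n(x,t)|=|Du(x,t+t_n)|\le C_1$ and $v_n(x_0,t)=\int_0^t u_t(x_0,s+t_n)\,ds$ one obtains $|v_n(x,t)|\le C_1(\mathrm{diam}\,\Omega+M)$ on $\bar\Omega\times[-M,M]$; together with (\ref{e1.4}) this gives a bound on $\|v_n(\cdot,t)\|_{C^{1,1}(\bar\Omega)}$ uniform in $n$ and in $t\in[-M,M]$, and (\ref{e1.4})--(\ref{e1.40}) give, for every smooth $D\subset\subset\Omega$, a uniform bound on $\|v_n(\cdot,t)\|_{C^{2,\alpha}(\bar D)}$. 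Finally $|v_n(x,t)-v_n(x,s)|\le C_1|t-s|$, so the family is equi-Lipschitz in $t$.

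Next I would extract the subsequence. Fix a smooth exhaustion $D_1\subset\subset D_2\subset\subset\cdots$ of $\Omega$. For each rational $q$ and each $k$, the compact embeddings $C^{1,1}(\bar\Omega)\hookrightarrow\hookrightarrow C^{1+\zeta}(\bar\Omega)$ and $C^{2,\alpha}(\bar D_k)\hookrightarrow\hookrightarrow C^{2}(\bar D_k)$ yield convergent subsequences, and a Cantor diagonalization over all pairs $(q,k)$ produces one subsequence (not relabelled) along which $v_n(\cdot,q)$ converges in $C^{1+\zeta}(\bar\Omega)\cap\bigcap_k C^{2}(\bar D_k)$ for every rational $q$. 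For an arbitrary time $t$ the equi-Lipschitz bound gives $\|v_n(\cdot,t)-v_m(\cdot,t)\|_{C^0(\bar\Omega)}\le 2C_1|t-q|+\|v_n(\cdot,q)-v_m(\cdot,q)\|_{C^0(\bar\Omega)}$, so choosing $q$ near $t$ and then $n,m$ large shows that $\{v_n(\cdot,t)\}$ is Cauchy in $C^0(\bar\Omega)$; write its limit as $u^\infty(\cdot,t)$.

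Then I would upgrade the mode of convergence. Interpolation inequalities of the form $\|f\|_{C^{1+\zeta}(\bar\Omega)}\le C\|f\|_{C^0(\bar\Omega)}^{\,1-\theta}\|f\|_{C^{1,1}(\bar\Omega)}^{\,\theta}$ and $\|f\|_{C^{2}(\bar D)}\le C\|f\|_{C^0(\bar D')}^{\,1-\theta'}\|f\|_{C^{2,\alpha}(\bar D')}^{\,\theta'}$ (with $D\subset\subset D'\subset\subset\Omega$ and suitable $\theta,\theta'\in(0,1)$), applied to the differences $v_n-v_m$ and combined with the uniform bounds from the first step, turn the $C^0$-convergence into convergence in $C^{1+\zeta}(\bar\Omega)$ and in $C^2(\bar D)$ for every $t$; this gives the first and third line of (\ref{e2.2}). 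For the other two lines no further extraction is needed: since $t_0$ is fixed one has, for every $t$, the identity $u(x,t+t_0+t_n)-u(x_0,t_0+t_n)=v_n(x,t+t_0)-v_n(x_0,t_0)$, and by the convergence just proved (at time $t+t_0$) together with $v_n(x_0,t_0)\to u^\infty(x_0,t_0)$ the right-hand side converges in $C^{1+\zeta}(\bar\Omega)$ and in $C^2(\bar D)$ to $u^{t_0,\infty}(x,t):=u^\infty(x,t+t_0)-u^\infty(x_0,t_0)$; alternatively one simply repeats the Arzel\`a--Ascoli step for $u(\cdot,\cdot+t_0+t_n)-u(x_0,t_0+t_n)$ and passes to a further subsequence.

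The two points that need care are both essentially bookkeeping: securing a single subsequence that works simultaneously at every time $t$ --- handled by the diagonalization over the countable set of rational times combined with the equi-Lipschitz-in-$t$ control from $|u_t|\le C_1$ --- and promoting the $C^0$-convergence obtained at the irrational times to the stated $C^{1+\zeta}(\bar\Omega)$ and $C^2(\bar D)$ convergence --- handled by interpolating against the uniform higher-order bounds (\ref{e1.4}) and (\ref{e1.40}); everything else is a direct use of the a priori estimates.
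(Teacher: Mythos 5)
Your proof is correct and follows essentially the same route as the paper's: uniform $C^{2}(\bar{\Omega})$ and $C^{2,\alpha}(\bar{D})$ bounds on the shifted functions drawn from (\ref{e1.4})--(\ref{e1.40}), followed by Arzel\`{a}--Ascoli and compact embeddings. You are in fact more explicit than the paper about two points it leaves implicit --- the diagonalization over a countable dense set of times (using the equi-Lipschitz-in-$t$ control $|u_t|\le C_1$) needed to secure a single subsequence valid for all $t$, and the observation that the convergence of the second family follows from that of the first via the identity $u(x,t+t_0+t_n)-u(x_0,t_0+t_n)=v_n(x,t+t_0)-v_n(x_0,t_0)$ --- whereas the paper obtains its interior compactness by routing through an intermediate Schauder estimate for $C^{3}(\bar{D})$ bounds rather than using (\ref{e1.40}) directly.
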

\begin{proof}
By making use of the intermediate value theorem, we obtain
\begin{equation*}
\begin{aligned}
&|u(x,t+t_{n})-u(x_{0},t_{n})|\\
=&|u(x,t+t_{n})-u(x_{0},t+t_{n})+u(x_{0},t+t_{n})-u(x_{0},t_{n})|\\
\leq &|u(x,t+t_{n})-u(x_{0},t+t_{n})|+|u(x_{0},t+t_{n})-u(x_{0},t_{n})| \\
\leq &diam(\Omega)\|Du\|_{C(\bar{\Omega})}+t\|u_{t}(\cdot,t)\|_{C(\bar{\Omega})},
 \end{aligned}
\end{equation*}
\begin{equation*}
\begin{aligned}
&|u(x,t+t_{0}+t_{n})-u(x_{0},t_{0}+t_{n})|\\
=&|u(x,t+t_{0}+t_{n})-u(x_{0},t+t_{0}+t_{n})+u(x_{0},t+t_{0}+t_{n})-u(x_{0},t_{0}+t_{n})|\\
\leq &|u(x,t+t_{0}+t_{n})-u(x_{0},t+t_{0}+t_{n})|+|u(x_{0},t+t_{0}+t_{n})-u(x_{0},t_{0}+t_{n})| \\
\leq &diam(\Omega)\|Du\|_{C(\bar{\Omega})}+t\|u_{t}(\cdot,t)\|_{C(\bar{\Omega})}.
 \end{aligned}
\end{equation*}
Combining the above two inequality with (\ref{e1.4}), we have
\begin{equation}\label{e2.3}
\begin{aligned}
\|u(\cdot,t+t_{n})-u(x_{0},t_{n})\|_{C^{2}(\bar{\Omega})}\leq C,\\
\|u(\cdot,t+t_{0}+t_{n})-u(x_{0},t_{0}+t_{n})\|_{C^{2}(\bar{\Omega})}\leq C,
 \end{aligned}
\end{equation}
where the constant $C$ is independent of $t_{n}$.

By $Arzel\grave{a}-Ascoli$ theorem, there exist functions $u^{\infty}$ and $u^{t_{0},\infty}$, such that
\begin{equation}\label{e2.4}
\begin{aligned}
 &\lim_{n\rightarrow+\infty}\|u(x,t+t_{n})-u(x_{0},t_{n})-u^{\infty}(x,t)\|_{C^{1+\zeta}(\bar{\Omega})}=0,\\
 &\lim_{n\rightarrow+\infty}\|u(x,t+t_{0}+t_{n})-u(x_{0},t_{0}+t_{n})-u^{t_{0},\infty}(x,t)\|_{C^{1+\zeta}(\bar{\Omega})}=0,\\
 \end{aligned}
\end{equation}
where  $0<\zeta<1$.

By intermediate Schauder estimates for parabolic
equations (cf. Lemma 14.6  and Proposition 4.25 in \cite{GM}) and (\ref{e1.4}), for any $D\subset\subset \Omega$, we have
\begin{equation}\label{e2.40}
\begin{aligned}
\sup_{t\geq 1}\|D^{3}u(\cdot,t)\|_{C(\bar{D})}\leq C,
\end{aligned}
\end{equation}
where $C$ is the constant depending only on  the known data and dist$(\partial \Omega, \partial D)$. Using (\ref{e2.3}),
we obtain
\begin{equation*}
\begin{aligned}
\|u(\cdot,t+t_{n})-u(x_{0},t_{n})\|_{C^{3}(\bar{D})}\leq C,\\
\|u(\cdot,t+t_{0}+t_{n})-u(x_{0},t_{0}+t_{n})\|_{C^{3}(\bar{D})}\leq C,
 \end{aligned}
\end{equation*}
where the constant $C$ is independent of $t_{n}$. By $Arzel\grave{a}-Ascoli$ theorem again, we get
\begin{equation}\label{e2.5}
\begin{aligned}
 &\lim_{n\rightarrow+\infty}\|u(x,t+t_{n})-u(x_{0},t_{n})-u^{\infty}(x,t)\|_{C^{2}(\bar{D})}=0,\\
 &\lim_{n\rightarrow+\infty}\|u(x,t+t_{0}+t_{n})-u(x_{0},t_{0}+t_{n})-u^{t_{0},\infty}(x,t)\|_{C^{2}(\bar{D})}=0.\\
 \end{aligned}
\end{equation}
Putting (\ref{e2.4}) and (\ref{e2.5}) together, we obtain the desired results.
\end{proof}
Define
\begin{equation*}
osc_{f}(t)\triangleq \max_{x\in \bar{\Omega}}f(x,t)-\min_{x\in \bar{\Omega}}f(x,t),
\end{equation*}
for any $f\in C(\bar{\Omega}\times [0,+\infty))$.  We fix the positive constant $t_0$ and write
$$w(x,t)=u(x,t)-u(x,t+t_0).$$
Then we have
\begin{equation}\label{e2.6}
\begin{aligned}
\dot{w}&=F(D^2u(x,t), Du(x,t),x)-F(D^2u(x,t+t_{0}), Du(x,t+t_{0}),x)\\
&=\int_0^1\frac{d}{ds}F(sD^2u(x,t)+(1-s)D^2u(x,t+t_{0}),sDu(x,t)+(1-s)Du(x,t+t_{0}),x)ds\\
&=\big(\int_0^1 \nabla_{r_{ij}}F(sD^2u(x,t)+(1-s)D^2u(x,t+t_{0}),sDu(x,t)+(1-s)Du(x,t+t_{0}),x)ds\big)w_{ij}\\
&+\big(\int_0^1 \nabla_{p_{i}}F(sD^2u(x,t)+(1-s)D^2u(x,t+t_{0}),sDu(x,t)+(1-s)Du(x,t+t_{0}),x)ds\big)w_{i}\\
&=a^{ij}w_{ij}+b^{i}w_{i},\qquad x\in\Omega, \quad t>0,
\end{aligned}
\end{equation}
where we denote
\begin{equation*}
\begin{aligned}
a^{ij}&=\int_0^1 \nabla_{r_{ij}}F(sD^2u(x,t)+(1-s)D^2u(x,t+t_{0}),sDu(x,t)+(1-s)Du(x,t+t_{0}),x)ds,\\
b^{i}&= \int_0^1 \nabla_{p_{i}}F(sD^2u(x,t)+(1-s)D^2u(x,t+t_{0}),sDu(x,t)+(1-s)Du(x,t+t_{0}),x)ds.
\end{aligned}
\end{equation*}
Calculating in a similar way, we know $w$ satisfies the following boundary condition,
\begin{equation}\label{e2.7}
\begin{aligned}
0&=h(Du(x,t),x)-h(Du(x,t+t_{0}),x)\\
&=\int_0^1\frac{d}{ds}h(sDu(x,t)+(1-s)Du(x,t+t_{0}),x)ds\\
&=\beta^{i}w_{i},\qquad x\in\partial\Omega, \quad t>0,
\end{aligned}
\end{equation}
where
\begin{equation*}
\begin{aligned}
\beta^{i}&= \int_0^1 \nabla_{p_{i}}h(sDu(x,t)+(1-s)Du(x,t+t_{0}),x)ds.
\end{aligned}
\end{equation*}
By (\ref{e1.4}) and (\ref{e1.5}), there exists $N\in Z^{+}$ depending only on the property of $h(\cdot,x)$ in $\partial\Omega$, such that if $0<t_{0}\leq\frac{1}{N}$, then  $a^{ij}, b^{i}, $ and $\beta^{i}$ satisfy the conditions in Lemma \ref{l2.1}.

\begin{lemma}\label{l2.3}
$u^{\infty}$ and $u^{t_{0},\infty}$ satisfy the following evolution equation with second boundary condition:
\begin{equation}\label{e2.8}
\left\{ \begin{aligned}u_{t}-F(D^{2}u,Du,x)&=0,
&  x\in \Omega,\,\, t>0, \\
h(Du,x)&=0,& x\in \partial\Omega,\,\, t>0,\\
\end{aligned} \right.
\end{equation}
and
\begin{equation*}
osc_{[u^{\infty}-u^{t_{0},\infty}]}(t)
\end{equation*}
is a strictly decreasing function or $u^{\infty}-u^{t_{0},\infty}$ is constant. In addition, $u^{\infty}$ and $u^{t_{0},\infty}$  satisfy the estimate (\ref{e1.4}),(\ref{e1.40}) and (\ref{e1.5}).
\end{lemma}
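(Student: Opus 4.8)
The plan is to establish the three assertions of Lemma~\ref{l2.3} in order: first that $u^{\infty}$ and $u^{t_{0},\infty}$ solve \eqref{e2.8} and inherit the a~priori bounds \eqref{e1.4}, \eqref{e1.40}, \eqref{e1.5}; then the dichotomy for the oscillation of $v:=u^{\infty}-u^{t_{0},\infty}$.

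For the first part, write $u_{n}(x,t)=u(x,t+t_{n})-u(x_{0},t_{n})$ and $\tilde u_{n}(x,t)=u(x,t+t_{0}+t_{n})-u(x_{0},t_{0}+t_{n})$. Translating the time variable and subtracting a constant change neither $Du$, $D^{2}u$ nor $u_{t}$, so each $u_{n}$ and $\tilde u_{n}$ solves the PDE and boundary condition in \eqref{e2.8} and satisfies \eqref{e1.4}, \eqref{e1.40}, \eqref{e1.5} (these being the hypotheses on $u$ itself, valid for $n$ large enough that $t+t_{n}\ge1$). By Lemma~\ref{l2.2}, $u_{n}\to u^{\infty}$ in $C^{1+\zeta}(\bar\Omega)$ and in $C^{2}(\bar D)$ for every $D\subset\subset\Omega$. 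I would pass to the limit in the equation only on compact subsets: $C^{2}(\bar D)$-convergence and continuity of $F$ give $F(D^{2}u_{n},Du_{n},x)\to F(D^{2}u^{\infty},Du^{\infty},x)$ uniformly on $\bar D$; integrating the PDE in time, $u_{n}(x,t_{2})-u_{n}(x,t_{1})=\int_{t_{1}}^{t_{2}}F(D^{2}u_{n},Du_{n},x)\,ds$, and letting $n\to\infty$ (the bound on $u_{t}$ in \eqref{e1.4} provides domination) yields $u^{\infty}(x,t_{2})-u^{\infty}(x,t_{1})=\int_{t_{1}}^{t_{2}}F(D^{2}u^{\infty},Du^{\infty},x)\,ds$; since the integrand is continuous in $s$, $\partial_{t}u^{\infty}=F(D^{2}u^{\infty},Du^{\infty},x)$ on $D$, hence on $\Omega$. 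For the boundary condition, $C^{1+\zeta}(\bar\Omega)$-convergence gives $Du_{n}\to Du^{\infty}$ uniformly on $\bar\Omega$, so $0=h(Du_{n},x)\to h(Du^{\infty},x)$ on $\partial\Omega$. The bounds then pass to the limit: \eqref{e1.4} by uniform convergence of the gradients, by $\partial_{t}u^{\infty}(x,t)=\lim_{n}u_{t}(x,t+t_{n})$ in the interior, and by $\|D^{2}u^{\infty}(\cdot,t)\|_{C(\bar D)}\le C_{1}$ for all $D\subset\subset\Omega$; \eqref{e1.40} by lower semicontinuity of the Hölder seminorm; \eqref{e1.5} by uniform convergence of $Du_{n}$ and continuity of $h_{p_{k}}$. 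The same reasoning gives the corresponding statements for $u^{t_{0},\infty}$.

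For the dichotomy, set $v=u^{\infty}-u^{t_{0},\infty}$ and repeat the computation \eqref{e2.6}--\eqref{e2.7} verbatim, with $u(\cdot,t)$, $u(\cdot,t+t_{0})$ replaced by the solutions $u^{\infty}(\cdot,t)$, $u^{t_{0},\infty}(\cdot,t)$ of \eqref{e2.8}: this gives $\partial_{t}v=\tilde a^{ij}v_{ij}+\tilde b^{i}v_{i}$ in $\Omega$ and $\tilde\beta^{i}v_{i}=0$ on $\partial\Omega$, where $\tilde a^{ij},\tilde b^{i},\tilde\beta^{i}$ are the integrals over $s\in[0,1]$ of $\nabla_{r_{ij}}F$, $\nabla_{p_{i}}F$, $\nabla_{p_{i}}h$ along the segment joining the data of $u^{\infty}$ and $u^{t_{0},\infty}$. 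By the first part these arguments remain in a fixed compact set, so $\tilde a^{ij},\tilde b^{i}$ are continuous on every $\bar D$ and bounded on $\bar\Omega$; $[\tilde a^{ij}]>0$ since $F$ is strictly parabolic \eqref{e1.2}; and, using \eqref{e1.3}, the gradient bound \eqref{e1.4}, compactness of $\partial\Omega$, and $0<t_{0}\le\frac1N$, the field $\tilde\beta$ is uniformly strictly oblique. Hence $v$ satisfies the hypotheses of Lemma~\ref{l2.1} --- whose proof uses the strong maximum principle in the interior and the Hopf lemma at $\partial\Omega$, and so needs only the interior $C^{2}$ and the $C^{1}(\bar\Omega)$ regularity of $v$ --- and we conclude that $osc_{v}(t)$ is strictly decreasing or $v$ is constant. (In fact $v(x,t)=\lim_{n}[w(x,t+t_{n})-w(x_{0},t_{n})]$ with $w(x,s)=u(x,s)-u(x,s+t_{0})$, so $osc_{v}(t)=\lim_{n}osc_{w}(t+t_{n})$; as $osc_{w}$ is non-increasing by Lemma~\ref{l2.1}, this limit is independent of $t$, forcing the second alternative, so $v$ is constant. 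Lemma~\ref{l2.3} records only the dichotomy.)

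The main obstacle is that Lemma~\ref{l2.2} provides $u^{\infty}$ and $u^{t_{0},\infty}$ only with \emph{interior} $C^{2}$ regularity and \emph{boundary} $C^{1+\zeta}$ regularity, so neither the limiting equation nor Lemma~\ref{l2.1} can be quoted ``up to the boundary'' without care: the passage to the limit in $F(D^{2}u_{n},\dots)$ must be localized to compact subsets, $\partial_{t}u^{\infty}$ recovered from the integrated equation rather than from a global $C^{2,1}$ bound, and the maximum-principle/Hopf argument underlying Lemma~\ref{l2.1} re-examined to confirm it uses only interior second derivatives. A secondary point is continuity in $t$ of the limit functions, needed to differentiate the integrated equation; this follows from the equicontinuity supplied by the uniform bound on $u_{t}$ in \eqref{e1.4}.
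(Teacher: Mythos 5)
Your proposal is correct and follows essentially the same route as the paper: passing to the limit in the time-integrated equation on compact subsets, recovering the boundary condition from the $C^{1+\zeta}(\bar\Omega)$ convergence, and then applying the secant computation \eqref{e2.6}--\eqref{e2.7} together with Lemma \ref{l2.1} to the difference $u^{\infty}-u^{t_{0},\infty}$. Your extra care in checking that Lemma \ref{l2.1} applies despite the limit functions having only interior $C^{2}$ regularity is a welcome refinement of a point the paper passes over, but it does not change the argument.
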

\begin{proof}
We first point out that $u^{\infty}$ satisfy the evolution equation with second boundary condition.
In fact, if we denote $w_n(x,t) = u(x,t+t_{n})-u(x_{0},t_{n})$, then it satisfies equation \eqref{e2.8}.
For any $x \in \Omega$, we can choose an open subset $D$ with compact support in $\Omega$ such that $x \in D\subset \bar{D}\subset \Omega$.
By \eqref{e2.2} in Lemma \ref{l2.2}, we know $w_n$ and $F(D^2w_n,Dw_n,x)$ uniformly converge to $u^{\infty}$ and $F(D^2u^{\infty},u^{\infty},x)$ in $D$
respectively as $n\rightarrow \infty$.

 By letting $n\rightarrow \infty$ in $(w_n)_t = F(D^2w_n,Dw_n,x)$, we obtain
 \begin{equation*}
 \begin{split}
& u^{\infty}(x,t)-u^{\infty}(x,t_0)=\lim_{n\rightarrow \infty} ( w_n(x,t)-w_n(x,t_o))\\
= & \lim_{n\rightarrow \infty} \int_{t_o}^t  (w_n)_s ds =\lim_{n\rightarrow \infty} \int_{t_o}^t  F(D^2 w_n, D w_n,x) ds\\
= &  \int_{t_o}^t \lim_{n\rightarrow \infty} F(D^2 w_n, D w_n,x) ds = \int_{t_o}^t  F(D^2 u^{\infty}, D u^{\infty},x) ds .
 \end{split}
 \end{equation*}
Taking derivative with respect to the variable, we deduce that $u^{\infty}_t= F(D^2 u^{\infty}, D u^{\infty},x)$.
By uniform convergence of $w_n(x,t)$ to $u^{\infty}(x,t)$ in $C^{1,\zeta}(\bar{\Omega})$ as $n\rightarrow \infty$
 in \eqref{e2.2}, we easily derive that $h(D u^{\infty},x)=0$ on $\partial \Omega$. Hence $u^{\infty}$ satisfies equation \eqref{e2.8}.
Similarly, $u^{t_0,\infty}$ also satisfies equation \eqref{e2.8}.

If we set  $$w(x,t)=u^{\infty}(x,t)-u^{t_{0},\infty}(x,t),$$
then by \eqref{e2.6} and \eqref{e2.7}, we know $w(x,t)$ satisfies
\begin{equation*}
\left\{ \begin{aligned}w_{t}-a^{ij}w_{ij}-b^{i}w_{i}&=0,
&  x\in \Omega,\,\, t>0, \\
\beta^{k}w_{k}&=0,& x\in \partial\Omega,\,\, t>0.\\
\end{aligned} \right.
\end{equation*}
By Lemma \ref{l2.1}, we deduce that $osc_{[u^{\infty}-u^{t_{0},\infty}]}(t)$ is a strictly decreasing function
or $u^{\infty}-u^{t_{0},\infty}$ is constant.
Furthermore, since $u\in C^{3+\alpha,\frac{3+\alpha}{2}}(\bar{\Omega}_{T})$,
by using \eqref{e2.40}, \eqref{e2.5} and  $Arzel\grave{a}-Ascoli$ theorem,
we know that  $u^{\infty}$ and $u^{t_{0},\infty}$  satisfy the estimate (\ref{e1.4}),(\ref{e1.40}) and (\ref{e1.5}).
\end{proof}

\begin{lemma}\label{l2.4}
If we take any  positive constant $0<t_0\leq \frac{1}{N}$ for some large $N>0$  depending only on the property of $h(\cdot,x)$ in $\partial\Omega$ and write $$w(x,t)=u(x,t)-u(x,t+t_0),$$
then we have
$\lim_{t\rightarrow+\infty}osc_{w}(t)=0$.
\end{lemma}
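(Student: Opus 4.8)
The plan is to combine three ingredients already at hand: the monotonicity of the oscillation from Lemma~\ref{l2.1}, the compactness along time translations from Lemma~\ref{l2.2}, and the rigidity statement of Lemma~\ref{l2.3}. First I would observe that, since $0<t_{0}\le\frac{1}{N}$, the coefficients $a^{ij},b^{i},\beta^{i}$ built in \eqref{e2.6}--\eqref{e2.7} verify the hypotheses of Lemma~\ref{l2.1}, so $w(x,t)=u(x,t)-u(x,t+t_{0})$ solves a linear oblique parabolic problem of type \eqref{e2.1}. Hence $osc_{w}(t)$ is either strictly decreasing in $t$ or $w$ is constant; in the second case $osc_{w}(t)\equiv 0$ and there is nothing to prove, so assume the first. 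Being nonnegative and monotone, $osc_{w}(t)$ then has a limit $L:=\lim_{t\to+\infty}osc_{w}(t)\ge 0$, and the whole point is to rule out $L>0$.

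Next I would fix an arbitrary sequence $t_{n}\to+\infty$ and a point $x_{0}\in\bar{\Omega}$, and apply Lemma~\ref{l2.2} to pass to a subsequence along which $u(x,t+t_{n})-u(x_{0},t_{n})\to u^{\infty}(x,t)$ and $u(x,t+t_{0}+t_{n})-u(x_{0},t_{0}+t_{n})\to u^{t_{0},\infty}(x,t)$, the convergence holding in $C^{1+\zeta}(\bar{\Omega})$ and hence uniformly on $\bar{\Omega}$ for each fixed $t$. Writing
$$w(x,t+t_{n})=\big(u(x,t+t_{n})-u(x_{0},t_{n})\big)-\big(u(x,t+t_{0}+t_{n})-u(x_{0},t_{0}+t_{n})\big)-\big(u(x_{0},t_{0}+t_{n})-u(x_{0},t_{n})\big),$$
the last summand does not depend on $x$ and therefore does not enter the oscillation, while $f\mapsto osc_{f}$ is continuous with respect to uniform convergence on $\bar{\Omega}$; consequently, for every fixed $t>0$,
$$osc_{[u^{\infty}-u^{t_{0},\infty}]}(t)=\lim_{n\to+\infty}osc_{w}(t+t_{n})=L,$$
the last equality because $osc_{w}$ has limit $L$ and $t+t_{n}\to+\infty$.

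Finally I would invoke Lemma~\ref{l2.3}: $osc_{[u^{\infty}-u^{t_{0},\infty}]}(t)$ is either strictly decreasing in $t$ or $u^{\infty}-u^{t_{0},\infty}$ is constant. The first alternative contradicts the identity $osc_{[u^{\infty}-u^{t_{0},\infty}]}(t)\equiv L$ just obtained, so the second must hold, which forces $osc_{[u^{\infty}-u^{t_{0},\infty}]}\equiv 0$ and hence $L=0$, as desired. The one step requiring a little care — and the only genuine obstacle — is the passage to the limit in the oscillation: one must notice that the $x$-independent shift $u(x_{0},t_{0}+t_{n})-u(x_{0},t_{n})$ is harmless, and that the $C^{0}(\bar{\Omega})$ part of the convergence in Lemma~\ref{l2.2} is precisely what makes $osc$ commute with the limit; everything else is a direct chaining of Lemmas~\ref{l2.1}--\ref{l2.3}.
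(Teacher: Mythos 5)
Your proof is correct and follows essentially the same route as the paper: the identity $osc_{[\varphi_{n}-\varphi_{t_{0},n}]}(t)=osc_{w}(t+t_{n})$ (the $x$-independent shift being harmless), passage to the limit in the oscillation via the $C^{0}(\bar{\Omega})$ convergence of Lemma~\ref{l2.2}, and the conclusion that $osc_{[u^{\infty}-u^{t_{0},\infty}]}(t)$ is constant in $t$, forcing the rigidity alternative. The only cosmetic difference is that you quote the dichotomy of Lemma~\ref{l2.3} at the last step, whereas the paper re-derives it by applying the strong maximum principle and Hopf's lemma directly to $u^{\infty}-u^{t_{0},\infty}$; the substance is identical.
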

\begin{proof}
By Lemma \ref{l2.1}, $\lim_{t\rightarrow+\infty}osc_{w}(t)$ exists.

If $\lim_{t\rightarrow+\infty}osc_{w}(t)=\epsilon>0.$
Then $w$ is not a constant function. For any sequence $\{t_{n}\} \,\, (t_{n}\rightarrow +\infty)$
and for $x_{0}\in \bar{\Omega}$ fixed, let
\begin{equation*}
\begin{aligned}
\varphi_{n}(x,t)&=u(x,t+t_{n})-u(x_{0},t_{n}),\\
\varphi_{t_{0},n}(x,t)&=u(x,t+t_{0}+t_{n})-u(x_{0},t_{0}+t_{n}).
\end{aligned}
\end{equation*}
It is easy to check that,
\begin{equation}\label{e2.9}
osc_{[\varphi_{n}-\varphi_{t_{0},n}]}(t)=osc_{w}(t+t_{n}).
\end{equation}
There exists a subsequence of $\{t_{n}\}$ (again denoted by itself), such that
\begin{equation*}
\lim_{n\rightarrow+\infty}osc_{[\varphi_{n}-\varphi_{t_{0},n}]}(t)=\max_{\bar{\Omega}}(u^{\infty}-u^{t_{0},\infty})-\min_{\bar{\Omega}}(u^{\infty}-u^{t_{0},\infty}),
\end{equation*}
where we use Lemma \ref{l2.3}. On the other hand, by the assumption of the beginning ,
\begin{equation*}
\lim_{n\rightarrow+\infty}osc_{w}(t+t_{n})=\epsilon.
\end{equation*}
Then by (\ref{e2.9}), we obtain
\begin{equation*}
\begin{aligned}
osc_{[u^{\infty}-u^{t_{0},\infty}]}(t)&=\max_{\bar{\Omega}}(u^{\infty}-u^{t_{0},\infty})(t)-\min_{\bar{\Omega}}(u^{\infty}-u^{t_{0},\infty})(t)\\
&=\epsilon.
\end{aligned}
\end{equation*}
Since $ u^{\infty}$ and $u^{t_{0},\infty}$ satisfy equation \eqref{e2.8}, then we know $u^* = u^{\infty}-u^{t_{0},\infty}$
satisfies the uniformly  parabolic equation
\begin{equation*}
\left\{ \begin{aligned}u^*_{t}-a^{ij}u^*_{ij}-b^{i}u^*_{i}&=0,
&  \quad \text{in} \quad \Omega\times (-\infty,+\infty), \\
\beta^{k} u^*_{k}&=0,& \quad \text{on} \quad\partial\Omega\times (-\infty,+\infty),\\
\end{aligned} \right.
\end{equation*}
where
\begin{equation*}
\begin{aligned}
a^{ij}&=\int_0^1 \nabla_{r_{ij}}F(sD^2u^{\infty}(x,t)+(1-s)D^2u^{t_0,\infty}(x,t),sDu^{\infty}(x,t)+(1-s)Du^{t_0,\infty}(x,t),x)ds,\\
b^{i}&= \int_0^1 \nabla_{p_{i}}F(sD^2u^{\infty}(x,t)+(1-s)D^2u^{t_0,\infty}(x,t),sDu^{\infty}(x,t)+(1-s)Du^{t_0,\infty}(x,t),x)ds.
\end{aligned}
\end{equation*}

By the strong maximum principle and Hopf's Lemma,
we know $u^*= u^{\infty}-u^{t_{0},\infty}$ is a constant and hence $osc_{[u^{\infty}-u^{t_{0},\infty}]} = 0$.
This makes a contradiction to
$osc_{[u^{\infty}-u^{t_{0},\infty}]}(t) = \epsilon$ and we get the desired results.
\end{proof}
As in the proof of Lemma \ref{l2.2}, we have the following result:
\begin{lemma}\label{l2.5}
For any sequence $\{t_{n}\} \,\, (t_{n}\rightarrow +\infty)$ and for $x_{0}\in \bar{\Omega}$ fixed,
there exists a subsequence of $\{t_{n}\}$ (again denoted by itself), such that
\begin{equation*}
u(x,t+t_{n})-u(x_{0},t_{n})
\end{equation*}
 converge to   $u^{0}(x,t)$   in the following sense,
\begin{equation}\label{e2.10}
\begin{aligned}
 &\lim_{n\rightarrow+\infty}\|u(x,t+t_{n})-u(x_{0},t_{n})-u^{0}(x,t)\|_{C^{1+\zeta}(\bar{\Omega})}=0,\\
 &\lim_{n\rightarrow+\infty}\|u(x,t+t_{n})-u(x_{0},t_{n})-u^{0}(x,t)\|_{C^{2}(\bar{D})}=0,\\
 \end{aligned}
\end{equation}
where $D\subset\subset\Omega$, $0<\zeta<1$. In addition,   $u^{0}(x,t)$ satisfies (\ref{e2.8}), (\ref{e1.4}),(\ref{e1.40}) and (\ref{e1.5}).
\end{lemma}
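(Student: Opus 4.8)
The plan is to rerun, for the single sequence $\varphi_{n}(x,t)=u(x,t+t_{n})-u(x_{0},t_{n})$, the compactness argument already used in Lemmas~\ref{l2.2} and~\ref{l2.3}, and then pass to the limit in the equation and in the a priori bounds. First I would fix $R>0$ and restrict attention to $t\in[-R,R]$, discarding the finitely many $t_{n}$ with $t_{n}<R+1$ so that $t+t_{n}\ge 1$ throughout. The intermediate-value-theorem estimate from the proof of Lemma~\ref{l2.2}, together with \eqref{e1.4}, gives
\[
|\varphi_{n}(x,t)|\le \operatorname{diam}(\Omega)\,\|Du\|_{C(\bar\Omega)}+|t|\,\|u_{t}\|_{C(\bar\Omega)}\le C(R),
\]
and since $D\varphi_{n}(\cdot,t)=Du(\cdot,t+t_{n})$ and $D^{2}\varphi_{n}(\cdot,t)=D^{2}u(\cdot,t+t_{n})$, the bound \eqref{e1.4} also controls $\|\varphi_{n}(\cdot,t)\|_{C^{2}(\bar\Omega)}$ uniformly in $n$ and in $t\in[-R,R]$. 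A $C^{2}(\bar\Omega)$ bound is precompact in $C^{1+\zeta}(\bar\Omega)$ for $\zeta<1$, so Arzelà--Ascoli together with a diagonal extraction over $R=1,2,3,\dots$ produces a subsequence and a limit $u^{0}$ satisfying the first line of \eqref{e2.10}.

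Next, for $D\subset\subset\Omega$ I would invoke the interior parabolic Schauder estimates (Lemma~14.6 and Proposition~4.25 of \cite{GM}) together with \eqref{e1.4} to obtain, exactly as in \eqref{e2.40}, $\sup_{t\ge 1}\|D^{3}u(\cdot,t)\|_{C(\bar D)}\le C$; hence $\|\varphi_{n}(\cdot,t)\|_{C^{3}(\bar D)}\le C$ uniformly, and a second application of Arzelà--Ascoli upgrades the convergence to $C^{2}(\bar D)$, which is the second line of \eqref{e2.10}. Passing to the limit in the equation then follows Lemma~\ref{l2.3} verbatim: from $(\varphi_{n})_{t}=F(D^{2}\varphi_{n},D\varphi_{n},x)$, the interior $C^{2}$-convergence and the continuity of $F$ yield $u^{0}(x,t)-u^{0}(x,t_{0})=\int_{t_{0}}^{t}F(D^{2}u^{0},Du^{0},x)\,ds$ for $x\in D$, hence $u^{0}_{t}=F(D^{2}u^{0},Du^{0},x)$ in $\Omega$; and $h(Du^{0},x)=\lim_{n}h(Du(\cdot,t+t_{n}),x)=0$ on $\partial\Omega$ by the $C^{1+\zeta}(\bar\Omega)$-convergence and the continuity of $h$. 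Thus $u^{0}$ solves \eqref{e2.8}.

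Finally I would transfer the a priori bounds. Each $u(\cdot,t+t_{n})$ satisfies \eqref{e1.4}, \eqref{e1.40} and \eqref{e1.5} with the constants $C_{1},C_{2},C_{3}$ of Theorem~\ref{t1.1}, which are independent of $n$ precisely because $t_{n}\to+\infty$ forces $t+t_{n}\ge 1$. The $C^{1}$ and $C^{2}$ parts of \eqref{e1.4} pass to the limit directly from \eqref{e2.10}, while $\|u^{0}_{t}(\cdot,t)\|_{C(\bar\Omega)}=\|F(D^{2}u^{0},Du^{0},x)\|_{C(\bar\Omega)}$ is bounded using \eqref{e1.4} and the continuity of $F$; the uniform interior $C^{3}(\bar D)$ bound above (or Evans--Krylov, under Remark~\ref{r1.1}) combined with Arzelà--Ascoli gives \eqref{e1.40} for $u^{0}$; and \eqref{e1.5} survives because $h_{p_{k}}$ is continuous and $D\varphi_{n}\to Du^{0}$ uniformly on $\bar\Omega$.

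I do not expect a genuine obstacle here beyond the bookkeeping parallel to Lemmas~\ref{l2.2}--\ref{l2.3}; the one point to keep in view is the asymmetry between interior and boundary. The equation can only be passed to the limit in the interior, where the extra derivative coming from the Schauder/\eqref{e1.40} estimate upgrades the convergence of $D^{2}\varphi_{n}$ to $C^{2}(\bar D)$, whereas the oblique boundary condition already survives with merely the global $C^{1+\zeta}(\bar\Omega)$-convergence of $D\varphi_{n}$ --- and this is exactly why the conclusion is phrased with the two different norms $C^{1+\zeta}(\bar\Omega)$ and $C^{2}(\bar D)$.
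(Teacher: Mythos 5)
Your proposal is correct and follows essentially the same route as the paper: the paper gives no separate proof of Lemma~\ref{l2.5}, simply stating ``as in the proof of Lemma~\ref{l2.2}'', and the ``in addition'' clause is handled exactly as in Lemma~\ref{l2.3}. Your extra care with the diagonal extraction over $t\in[-R,R]$ and the explicit transfer of the bounds \eqref{e1.4}, \eqref{e1.40}, \eqref{e1.5} to the limit only fills in details the paper leaves implicit.
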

\begin{lemma}\label{l2.6}
Let $t_{0}$ be some positive constant as in Lemma \ref{l2.4}.
There exists some constant $v^{\infty}$ such that
\begin{equation}\label{e2.11}
u^{0}(x,t+t_{0})=u^{0}(x,t)+v^{\infty}\cdot t_{0}\quad \text{for} \quad \text{any}\quad (x,t)\in \bar{\Omega}\times \{t|t\geq 0\}.
\end{equation}
\end{lemma}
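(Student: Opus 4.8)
The plan is to promote the soft compactness behind the construction of $u^{0}$ in Lemma \ref{l2.5} to the rigid translation identity \eqref{e2.11}. Recall that $u^{0}$ is obtained, along a subsequence of $\{t_{n}\}$ (still written $\{t_{n}\}$), as the $C^{1+\zeta}(\bar{\Omega})$-limit of $\varphi_{n}(x,t)=u(x,t+t_{n})-u(x_{0},t_{n})$ and is a classical solution of the autonomous problem \eqref{e2.8}. I would prove that the increment $u^{0}(x,t+t_{0})-u^{0}(x,t)$ is a single constant, first showing that it does not depend on $x$, then that it does not depend on $t$.

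For the $x$-independence I would use the elementary identity
\begin{equation*}
\varphi_{n}(x,t+t_{0})-\varphi_{n}(x,t)=u(x,t+t_{0}+t_{n})-u(x,t+t_{n})=-w(x,t+t_{n}),
\end{equation*}
where $w(x,t)=u(x,t)-u(x,t+t_{0})$ is the function of Lemma \ref{l2.4}. Letting $n\rightarrow\infty$, the left-hand side converges uniformly in $x\in\bar{\Omega}$ to $u^{0}(x,t+t_{0})-u^{0}(x,t)$; since $osc_{w}(t+t_{n})\rightarrow 0$ by Lemma \ref{l2.4} (because $t+t_{n}\rightarrow+\infty$), for any $x,x'\in\bar{\Omega}$ one gets
\begin{align*}
|\,[u^{0}(x,t+t_{0})-u^{0}(x,t)]-[u^{0}(x',t+t_{0})-u^{0}(x',t)]\,|
&=\lim_{n\rightarrow\infty}|w(x,t+t_{n})-w(x',t+t_{n})|\\
&\leq\lim_{n\rightarrow\infty}osc_{w}(t+t_{n})=0 .
\end{align*}
Hence $\psi(t):=u^{0}(x,t+t_{0})-u^{0}(x,t)$ is well defined independently of $x$.

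To see that $\psi$ is constant in $t$ I would bring in the equation. Because \eqref{e2.8} carries no explicit $t$-dependence, $\tilde{u}(x,t):=u^{0}(x,t+t_{0})$ is again a solution of \eqref{e2.8}; and since $\tilde{u}(x,t)-u^{0}(x,t)=\psi(t)$ depends only on $t$, differentiating in $x$ gives $D\tilde{u}=Du^{0}$ and $D^{2}\tilde{u}=D^{2}u^{0}$ in $\Omega$. Evaluating the equations at an interior point then yields $\partial_{t}\tilde{u}=F(D^{2}\tilde{u},D\tilde{u},x)=F(D^{2}u^{0},Du^{0},x)=\partial_{t}u^{0}$, so $\psi'(t)=\partial_{t}\tilde{u}-\partial_{t}u^{0}\equiv 0$ on $[0,+\infty)$. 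Therefore $\psi\equiv\psi(0)$, and setting $v^{\infty}:=\psi(0)/t_{0}$ gives \eqref{e2.11}.

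The one genuinely substantive input — and the step I expect to be the main obstacle — is Lemma \ref{l2.4}, i.e. $\lim_{t\rightarrow+\infty}osc_{w}(t)=0$: it is precisely this decay that forces the $x$-independence of the increment and so converts compactness into translation. The remaining points, namely the subsequence bookkeeping in the construction of $u^{0}$ and the interior regularity needed to differentiate $\psi$ in $t$, are routine once one knows that $u^{0}$ solves \eqref{e2.8} classically in $\Omega$. I would also note that $v^{\infty}$ might a priori depend on the fixed step $t_{0}$, a point presumably pinned down later.
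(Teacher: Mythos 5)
Your proof is correct, and it departs from the paper's in one substantive respect. The paper proves \eqref{e2.11} in a single stroke: it first upgrades Lemma \ref{l2.4} to the assertion that $w(x,t)=u(x,t)-u(x,t+t_{0})$ converges uniformly on $\bar{\Omega}$ to a single constant, written $-v^{\infty}\cdot t_{0}$, as $t\to+\infty$ (an upgrade that tacitly uses the monotonicity from Lemma \ref{l2.1} --- $\max_{x}w$ decreases, $\min_{x}w$ increases, both are bounded, so both converge, and the vanishing oscillation forces a common limit), and then computes $u^{0}(x,t+t_{0})-u^{0}(x,t)=\lim_{n}\bigl(u(x,t+t_{0}+t_{n})-u(x,t+t_{n})\bigr)=v^{\infty}\cdot t_{0}$ directly from the definition of $u^{0}$ in Lemma \ref{l2.5}, which yields $x$- and $t$-independence simultaneously. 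You instead split the constancy into two steps: the $x$-independence you extract from $osc_{w}(t+t_{n})\to 0$ alone, and the $t$-independence you obtain from the autonomy of \eqref{e2.8} --- since $u^{0}(\cdot,\cdot+t_{0})$ is again a solution and differs from $u^{0}$ by a function of $t$ only, the two have identical spatial derivatives, hence identical time derivatives by the equation, so the difference $\psi(t)$ is constant. This is a clean alternative that avoids invoking the time-convergence of $w$ (and hence the monotonicity input from Lemma \ref{l2.1}) at this stage; the price is that you must know $u^{0}$ solves \eqref{e2.8} classically in the interior, which Lemma \ref{l2.5} does supply. Both arguments rest on the same essential input, Lemma \ref{l2.4}, as you correctly identify, and your closing caveat that $v^{\infty}$ could a priori depend on $t_{0}$ is precisely the issue the paper addresses afterwards in Lemmas \ref{l2.7}--\ref{l2.9}.
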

\begin{proof}
By  making use of Lemma \ref{l2.5}, we have
\begin{equation}\label{e2.12}
\begin{aligned}
 &\lim_{n\rightarrow+\infty}u(x,t+t_{n})-u(x_{0},t_{n})=u^{0}(x,t),\\
 &\lim_{n\rightarrow+\infty}u(x,t+t_{0}+t_{n})-u(x_{0},t_{n})=u^{0}(x,t+t_{0})\\
 \end{aligned}
\end{equation}
for any  $(x,t)\in \bar{\Omega}\times \{t|t\geq 0\}$. On the other hand,
using Lemma \ref{l2.4} we conclude that
as $t\rightarrow+\infty$, $u(x,t)-u(x,t+t_{0})$ converges to some constant uniformly in $\bar{\Omega}$. We denote the process by
\begin{equation*}
\lim_{t\rightarrow+\infty}(u(x,t)-u(x,t+t_{0}))=-v^{\infty}\cdot t_{0}.
\end{equation*}
Replacing $t$ by $t+t_{n}$ in the above, we obtain
\begin{equation}\label{e2.13}
\lim_{n\rightarrow+\infty}(u(x,t+t_{n})-u(x,t+t_{n}+t_{0}))=-v^{\infty}\cdot t_{0}.
\end{equation}
Combining (\ref{e2.12}) with (\ref{e2.13}),  we get
\begin{equation*}
\begin{aligned}
u^{0}(x,t+t_{0})-u^{0}(x,t)&=\lim_{n\rightarrow+\infty}(u(x,t+t_{0}+t_{n})-u(x_{0},t_{n}))\\
&-\lim_{n\rightarrow+\infty}(u(x,t+t_{n})-u(x_{0},t_{n}))\\
&=\lim_{n\rightarrow+\infty}(u(x,t+t_{n}+t_{0})-u(x,t+t_{n}))\\
&=v^{\infty}\cdot t_{0}.
\end{aligned}
\end{equation*}
\end{proof}
Let
\begin{equation*}
P=\{p_{i}|p_{0}<p_{1}<\cdots,i=0,1,\cdots\}
\end{equation*}
be the set of all positive prime numbers.

By Lemma \ref{l2.5}, $u^{0}(x,t)$ satisfies (\ref{e2.8}), (\ref{e1.4}), (\ref{e1.40}) and (\ref{e1.5}).
If we replace $(u, t_{1})$ by $(u^{0}, t_{0})$  and repeat the argument as in the proof of Lemma \ref{l2.6},
 we have
\begin{lemma}\label{l2.7}
There exist $N\in Z^{+}$  depending only on the property of $h(\cdot,x)$ in $\partial\Omega$
and the function $u^{1}(x,t)$ satisfying (\ref{e2.8}), (\ref{e1.4}), (\ref{e1.40}) and (\ref{e1.5}), such that we obtain
\begin{equation}\label{e2.14}
u^{1}(x,t+t_{1})=u^{1}(x,t)+\tilde{v}^{\infty}\cdot t_{1}\quad \text{for} \quad \text{any}\quad (x,t)\in \bar{\Omega}\times \{t|t\geq 0\},
\end{equation}
\begin{equation}\label{e2.15}
u^{1}(x,t+t_{0})=u^{1}(x,t)+v^{\infty}\cdot t_{0}\quad \text{for} \quad \text{any}\quad (x,t)\in \bar{\Omega}\times \{t|t\geq 0\},
\end{equation}
where $t_{0}=\frac{1}{Np_{0}}, t_{1}=\frac{1}{Np_{1}}, \tilde{v}^{\infty}=v^{\infty}$.
\end{lemma}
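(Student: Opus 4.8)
The plan is to carry the whole chain of Lemmas \ref{l2.1}--\ref{l2.6} over to $u^{0}$ in place of $u$. This is legitimate: by Lemma \ref{l2.5} the function $u^{0}$ is again a solution of \eqref{e2.8} satisfying \eqref{e1.4}, \eqref{e1.40}, \eqref{e1.5}, and the integer $N$ constructed before Lemma \ref{l2.3} depends only on $C_{1},C_{3}$ and on $h$, so the same $N$ works for $u^{0}$. Choosing $N$ large we have $t_{0}=\tfrac1{Np_{0}}\leq\tfrac1N$ and $t_{1}=\tfrac1{Np_{1}}\leq\tfrac1N$, so $t_{0}$ and $t_{1}$ are both admissible in Lemmas \ref{l2.1} and \ref{l2.4} applied to differences of time-translates of $u^{0}$. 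Applying Lemma \ref{l2.5} with $u^{0}$ in place of $u$: for any $t_{n}\to+\infty$ there is a subsequence along which $u^{0}(x,t+t_{n})-u^{0}(x_{0},t_{n})$ converges in $C^{1+\zeta}(\bar{\Omega})\cap C^{2}(\bar{D})$ to a function $u^{1}(x,t)$ solving \eqref{e2.8} and satisfying \eqref{e1.4}, \eqref{e1.40}, \eqref{e1.5}. This is the $u^{1}$ in the statement.

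To obtain \eqref{e2.14} I would rerun the proofs of Lemmas \ref{l2.4} and \ref{l2.6} with $(u^{0},t_{1})$ replacing $(u,t_{0})$. Lemma \ref{l2.4} (for $u^{0}$, period $t_{1}$) gives $\lim_{t\to+\infty}osc_{w}(t)=0$ for $w(x,t)=u^{0}(x,t)-u^{0}(x,t+t_{1})$; since $\min_{x}w(\cdot,t)$ is nondecreasing and $\max_{x}w(\cdot,t)$ nonincreasing (from the proof of Lemma \ref{l2.1}), $w(x,t)$ converges uniformly in $x$ to a constant, which we write $-\tilde{v}^{\infty}t_{1}$. Replacing $t$ by $t+t_{n}$ and passing to the limit along the subsequence defining $u^{1}$, exactly as in the proof of Lemma \ref{l2.6}, yields $u^{1}(x,t+t_{1})-u^{1}(x,t)=\tilde{v}^{\infty}t_{1}$, which is \eqref{e2.14}.

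For \eqref{e2.15} nothing new is needed. Lemma \ref{l2.6}, applied to $u^{0}$ with the admissible constant $t_{0}=\tfrac1{Np_{0}}$, gives $u^{0}(x,t+t_{0})=u^{0}(x,t)+v^{\infty}t_{0}$ for all $t\geq0$. Feeding this into the definition of $u^{1}$,
$$u^{1}(x,t+t_{0})=\lim_{n\to\infty}\bigl(u^{0}(x,t+t_{0}+t_{n})-u^{0}(x_{0},t_{n})\bigr)=\lim_{n\to\infty}\bigl(u^{0}(x,t+t_{n})+v^{\infty}t_{0}-u^{0}(x_{0},t_{n})\bigr)=u^{1}(x,t)+v^{\infty}t_{0},$$
which is \eqref{e2.15}. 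Finally, $\tilde{v}^{\infty}=v^{\infty}$ follows by iteration: applying \eqref{e2.14} $p_{1}$ times and \eqref{e2.15} $p_{0}$ times starting from an arbitrary $t\geq0$, and using $p_{1}t_{1}=p_{0}t_{0}=\tfrac1N$, both sides equal $u^{1}(x,t+\tfrac1N)$, so $u^{1}(x,t)+\tfrac{\tilde v^{\infty}}N=u^{1}(x,t)+\tfrac{v^{\infty}}N$ and hence $\tilde v^{\infty}=v^{\infty}$.

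I do not anticipate a real obstacle; the whole argument is essentially bookkeeping. The only point that needs care is that $u^{0}$ may genuinely be substituted for $u$ throughout Lemmas \ref{l2.1}--\ref{l2.6}, i.e.\ that $u^{0}$ is a bona fide solution of \eqref{e2.8} with enough regularity up to $\partial\Omega$ for the strong maximum principle and Hopf's lemma to apply --- but this is precisely what Lemma \ref{l2.5} records, so nothing further is required.
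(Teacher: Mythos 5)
Your proposal is correct and follows essentially the same route as the paper: extract $u^{1}$ by applying Lemma \ref{l2.5} to $u^{0}$, obtain \eqref{e2.15} by feeding the conclusion of Lemma \ref{l2.6} into the defining limit of $u^{1}$, obtain \eqref{e2.14} by rerunning the Lemma \ref{l2.4}/\ref{l2.6} argument for the pair $(u^{0},t_{1})$, and deduce $\tilde{v}^{\infty}=v^{\infty}$ by iterating both periodicity relations up to a common time (the paper iterates $Np_{0}$ and $Np_{1}$ steps to reach time $1$; you reach $\tfrac{1}{N}$ --- the same idea).
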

\begin{proof}
As Lemma \ref{l2.5},  there exist sequence $\{t_{n}\} \,\, (t_{n}\rightarrow +\infty)$ for  $x_{0}\in \bar{\Omega}$ to be  fixed and $u^{1}(x,t)$, such that
\begin{equation}\label{e2.16}
\begin{aligned}
 &\lim_{n\rightarrow+\infty}\|u^{0}(x,t+t_{n})-u^{0}(x_{0},t_{n})-u^{1}(x,t)\|_{C^{1+\zeta}(\bar{\Omega})}=0,\\
 &\lim_{n\rightarrow+\infty}\|u^{0}(x,t+t_{n})-u^{0}(x_{0},t_{n})-u^{1}(x,t)\|_{C^{2}(\bar{D})}=0,\\
 \end{aligned}
\end{equation}
where $\forall t\geq 0, D\subset\subset\Omega$, $0<\zeta<1$,  In addition,   $u^{1}(x,t)$ satisfies (\ref{e2.8}), (\ref{e1.4}) and (\ref{e1.5}).
By (\ref{e2.16}), we also have
\begin{equation}\label{e2.17}
\begin{aligned}
 &\lim_{n\rightarrow+\infty}\|u^{0}(x,t+t_{0}+t_{n})-u^{0}(x_{0},t_{n})-u^{1}(x,t+t_{0})\|_{C^{1+\zeta}(\bar{\Omega})}=0,\\
 &\lim_{n\rightarrow+\infty}\|u^{0}(x,t+t_{0}+t_{n})-u^{0}(x_{0},t_{n})-u^{1}(x,t+t_{0})\|_{C^{2}(\bar{D})}=0.\\
 \end{aligned}
\end{equation}
Combining (\ref{e2.16}) with (\ref{e2.17}), we obtain
\begin{equation}\label{e2.18}
\begin{aligned}
u^{1}(x,t+t_{0})-u^{1}(x,t)&=\lim_{n\rightarrow+\infty}(u^{0}(x,t+t_{0}+t_{n})-u^{0}(x_{0},t_{n}))\\
&-\lim_{n\rightarrow+\infty}(u^{0}(x,t+t_{n})-u^{0}(x_{0},t_{n}))\\
&=\lim_{n\rightarrow+\infty}(u^{0}(x,t+t_{0}+t_{n})-u^{0}(x,t+t_{n}))\\
&=v^{\infty}\cdot t_{0},\quad \text{for} \quad \text{any}\quad (x,t)\in \bar{\Omega}\times \{t|t\geq 0\},
\end{aligned}
\end{equation}
where we use Lemma \ref{l2.6}.

On the other hand, for any  positive constant $0<t_{1}\leq \frac{1}{N}$ for some large $N>0$  depending only
on the property of $h(\cdot,x)$ in $\partial\Omega$, as in the proof of Lemma \ref{l2.6}, there exists some constant $\tilde{v}^{\infty}$ such that
\begin{equation*}
\lim_{t\rightarrow+\infty}(u^{0}(x,t)-u^{0}(x,t+t_{1}))=-\tilde{v}^{\infty}\cdot t_{1}, \quad \text{uniformly}\,\,\, \text{in}\,\,\, \bar{\Omega}.
\end{equation*}
Replacing $t$ by $t+t_{n}$ in the above, we obtain
\begin{equation*}
\lim_{n\rightarrow+\infty}(u^{0}(x,t+t_{n})-u^{0}(x,t+t_{n}+t_{1}))=-\tilde{v}^{\infty}\cdot t_{1}.
\end{equation*}
Then we obtain
\begin{equation}\label{e2.19}
\begin{aligned}
\tilde{v}^{\infty}\cdot t_{1}&=\lim_{n\rightarrow+\infty}(u^{0}(x,t+t_{n}+t_{1})-u^{0}(x,t+t_{n}))\\
&=\lim_{n\rightarrow+\infty}(u^{0}(x,t+t_{n}+t_{1})-u^{0}(x_{0},t_{n})\\
&+u^{0}(x_{0},t_{n})-u^{0}(x,t+t_{n}))\\
&=\lim_{n\rightarrow+\infty}(u^{0}(x,t+t_{n}+t_{1})-u^{0}(x_{0},t_{n}))\\
&+\lim_{n\rightarrow+\infty}(u^{0}(x_{0},t_{n})-u^{0}(x,t+t_{n}))\\
&=u^{1}(x,t+t_{1})-u^{1}(x,t)\quad \text{for} \quad \text{any}\quad (x,t)\in \bar{\Omega}\times \{t|t\geq 0\},
\end{aligned}
\end{equation}
where we use (\ref{e2.16}).

By (\ref{e2.18}) and (\ref{e2.19}), for any $m_{0}, m_{1}\in Z^{+}$, we get
\begin{equation}\label{e2.20}
\begin{aligned}
u^{1}(x,t+m_{0}t_{0})-u^{1}(x,t)&=v^{\infty}\cdot m_{0}t_{0},\\
u^{1}(x,t+m_{1}t_{1})-u^{1}(x,t)&=\tilde{v}^{\infty}\cdot m_{1}t_{1},
\end{aligned}
\end{equation}
for any $(x,t)\in \bar{\Omega}\times \{t|t\geq 0\}$. By the conditions of the choice according to $t_{0}$ and $t_{1}$,
there exist $N\in Z^{+}$  depending only on the property of $h(\cdot,x)$ in $\partial\Omega$ such that we can take
$t_{0}=\frac{1}{Np_{0}}$ and $t_{1}=\frac{1}{Np_{1}}$ respectively. Letting $m_{0}=Np_{0}, m_{1}=Np_{1}, t=0$, then by (\ref{e2.20}) we obtain
\begin{equation*}
\begin{aligned}
u^{1}(x,1)-u^{1}(x,0)&=v^{\infty},\\
u^{1}(x,1)-u^{1}(x,0)&=\tilde{v}^{\infty}.
\end{aligned}
\end{equation*}
So $\tilde{v}^{\infty}=v^{\infty}.$
\end{proof}
By the proof of Lemma \ref{l2.7}, we deduce that
\begin{lemma}\label{l2.8}
There exist $N\in Z^{+}$  depending only on the property of $h(\cdot,x)$ in $\partial\Omega$
and the function $u^{i}(x,t)(i=0,1,\cdots)$ satisfying (\ref{e2.8}), (\ref{e1.4}), (\ref{e1.40})and (\ref{e1.5}), such that we obtain
\begin{equation}\label{e2.21}
\begin{aligned}
&u^{i}(x,t+\tau_{k})=u^{i}(x,t)+v^{\infty}\cdot \tau_{k}(k=0,1,\cdots,i)\\
&\text{for} \quad \text{any}\quad (x,t)\in \bar{\Omega}\times \{t|t\geq 0\},
\end{aligned}
\end{equation}
where $\tau_{k}=\frac{1}{Np_{k}}$ and $v^{\infty}$ is some constant.
\end{lemma}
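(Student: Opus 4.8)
The plan is to argue by induction on $i$, taking the cases $i=0$ and $i=1$ (which are exactly Lemma \ref{l2.6} and Lemma \ref{l2.7}, with $\tau_0=\frac{1}{Np_0}$ and $\tau_1=\frac{1}{Np_1}$) as the base. Before starting I would fix once and for all the integer $N\in Z^{+}$ produced in Lemma \ref{l2.3}: the point is that every function obtained along the induction satisfies \eqref{e1.4}, \eqref{e1.40} and \eqref{e1.5} with the \emph{same} constants $C_1,C_2,C_3$, so the parabolicity and the uniform obliqueness of the linearized coefficients $a^{ij},b^i,\beta^i$ are controlled by this single $N$ at every stage and no loss occurs. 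Since $p_k\geq 2$ we have $\tau_k=\frac{1}{Np_k}\leq\frac1N$ for all $k$, so Lemma \ref{l2.4} applies with $t_0=\tau_k$ to any of these functions.

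For the inductive step, assume $u^{i-1}(x,t)$ has been constructed, satisfying \eqref{e2.8}, \eqref{e1.4}, \eqref{e1.40}, \eqref{e1.5} and $u^{i-1}(x,t+\tau_k)=u^{i-1}(x,t)+v^\infty\tau_k$ for $k=0,\dots,i-1$. I would run the compactness construction of Lemma \ref{l2.5} with $u^{i-1}$ in place of $u$: pick $t_n\to+\infty$ and $x_0\in\bar\Omega$ and extract a subsequence so that $u^{i-1}(x,t+t_n)-u^{i-1}(x_0,t_n)$ converges in $C^{1+\zeta}(\bar\Omega)\cap C^2(\bar D)$, for every $D\subset\subset\Omega$, to a function $u^i(x,t)$ which again satisfies \eqref{e2.8}, \eqref{e1.4}, \eqref{e1.40}, \eqref{e1.5}. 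Checking \eqref{e2.21} for $u^i$ then splits in two. For $k\leq i-1$ I would pass the known relation to the limit: subtracting the defining limits at times $t+\tau_k$ and $t$ gives $u^i(x,t+\tau_k)-u^i(x,t)=\lim_n\big(u^{i-1}(x,t+\tau_k+t_n)-u^{i-1}(x,t+t_n)\big)=v^\infty\tau_k$. For $k=i$ I would apply Lemma \ref{l2.4} to $u^{i-1}$ with $t_0=\tau_i$, obtaining that $u^{i-1}(x,t)-u^{i-1}(x,t+\tau_i)$ converges uniformly on $\bar\Omega$ to a constant $-v_i^\infty\tau_i$ as $t\to\infty$; then, exactly as in the proof of Lemma \ref{l2.6} (replacing $t$ by $t+t_n$ and using the two limits just described), one gets $u^i(x,t+\tau_i)-u^i(x,t)=v_i^\infty\tau_i$.

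The final step is the identification $v_i^\infty=v^\infty$, which is where the form $\tau_k=\frac{1}{Np_k}$ pays off. Since $u^i$ now translates by $\tau_0$ with velocity $v^\infty$ and by $\tau_i$ with velocity $v_i^\infty$, iterating these relations gives $u^i(x,m_0\tau_0)-u^i(x,0)=v^\infty m_0\tau_0$ and $u^i(x,m_i\tau_i)-u^i(x,0)=v_i^\infty m_i\tau_i$ for all $m_0,m_i\in Z^{+}$; choosing $m_0=Np_0$ and $m_i=Np_i$ makes both left-hand sides equal to $u^i(x,1)-u^i(x,0)$, forcing $v_i^\infty=v^\infty$. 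This closes the induction and produces the family $\{u^i\}$ with the asserted properties.

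I expect the only genuinely delicate point to be the very first one: that a single $N$, and a single velocity $v^\infty$, can be used at every level. The uniformity of $N$ rests on the fact that the Arzel\`a--Ascoli and interior Schauder arguments of Lemmas \ref{l2.2} and \ref{l2.5} propagate the bounds \eqref{e1.4}--\eqref{e1.5} verbatim to the limit, so the hypotheses of Lemma \ref{l2.3} and Lemma \ref{l2.4} never degrade; the uniformity of $v^\infty$ is exactly the normalization $u^i(x,1)-u^i(x,0)=v^\infty$ coming from $m_k\tau_k=1$ with $m_k=Np_k\in Z^{+}$. Everything else is a mechanical repetition of the argument already carried out for Lemma \ref{l2.7}.
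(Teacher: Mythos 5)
Your proposal is correct and follows essentially the same route as the paper, which simply states that Lemma \ref{l2.8} follows ``by the proof of Lemma \ref{l2.7}''; you have spelled out the induction that this remark implicitly invokes (reapplying the compactness of Lemma \ref{l2.5} to $u^{i-1}$, passing the old translation identities to the limit, producing the new one at scale $\tau_i$ via Lemma \ref{l2.4}, and identifying the velocities through $Np_k\tau_k=1$). Your observations that the bounds \eqref{e1.4}--\eqref{e1.5} persist with the same constants, so that a single $N$ works at every stage, and that the normalization $u^{i}(x,1)-u^{i}(x,0)=v^{\infty}$ forces a single velocity, are exactly the points the paper relies on.
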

\begin{lemma}\label{l2.9}
There exist $u^{\infty}$ satisfying (\ref{e2.8}), (\ref{e1.4}), (\ref{e1.40})and (\ref{e1.5}), such that we obtain
\begin{equation}\label{e2.22}
\begin{aligned}
&u^{\infty}(x,t+\tau)=u^{\infty}(x,t)+v^{\infty}\cdot \tau\\
&\text{for} \quad \text{any}\quad (x,t)\in \bar{\Omega}\times \{t|t\geq 0\},\,\,\tau\geq 0,
\end{aligned}
\end{equation}
where  $v^{\infty}$ is some constant.
\end{lemma}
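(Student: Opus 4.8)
The plan is to take a further subsequential limit $u^{\infty}$ of the sequence $\{u^{i}\}$ produced by Lemma \ref{l2.8}, to verify that this $u^{\infty}$ still satisfies the equation \eqref{e2.8} and the a priori bounds \eqref{e1.4}, \eqref{e1.40}, \eqref{e1.5}, and then to promote the translation identity \eqref{e2.21}, which a priori only holds for the discrete times $\tau_{0},\tau_{1},\cdots$, to every $\tau\ge 0$ by a density argument together with continuity of $u^{\infty}$ in the time variable.

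Concretely, I would first fix $x_{0}\in\bar\Omega$ and normalize by replacing each $u^{i}$ with $u^{i}-u^{i}(x_{0},0)$; this changes nothing in \eqref{e2.21}, which only involves differences of $u^{i}$. By Lemma \ref{l2.8} every $u^{i}$ obeys \eqref{e1.4}, \eqref{e1.40}, \eqref{e1.5} with the same constants, and, exactly as in the proof of Lemma \ref{l2.2}, the interior Schauder estimates bound $\|D^{3}u^{i}(\cdot,t)\|_{C(\bar D)}$ for each $D\subset\subset\Omega$ uniformly in $i$ and locally uniformly in $t$. Choosing an exhaustion $D_{1}\subset\subset D_{2}\subset\subset\cdots$ of $\Omega$ and applying the $Arzel\grave{a}-Ascoli$ theorem with a diagonal extraction over the sets $D_{\ell}$ and the intervals $[0,\ell]$, I would obtain a subsequence (still written $\{u^{i}\}$) and a limit $u^{\infty}$ with
\begin{equation*}
\lim_{i\to\infty}\|u^{i}-u^{\infty}\|_{C^{1+\zeta}(\bar\Omega\times[0,T])}=0,\qquad
\lim_{i\to\infty}\|u^{i}-u^{\infty}\|_{C^{2}(\bar D\times[0,T])}=0
\end{equation*}
for all $D\subset\subset\Omega$, $T>0$, $0<\zeta<1$. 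Repeating the limiting argument of Lemma \ref{l2.3} --- passing to the limit in $u^{i}(x,t)-u^{i}(x,t_{0})=\int_{t_{0}}^{t}F(D^{2}u^{i},Du^{i},x)\,ds$ on each $D$ and in $h(Du^{i},x)=0$ on $\partial\Omega$ --- then shows that $u^{\infty}$ solves \eqref{e2.8}, and \eqref{e1.4}, \eqref{e1.40}, \eqref{e1.5} are inherited by $u^{\infty}$ under these convergences (for \eqref{e1.40} one may also invoke Remark \ref{r1.1}); in particular $u^{\infty}$ is continuous in $t$ on $\bar\Omega\times[0,\infty)$, e.g.\ by the bound on $u^{\infty}_{t}$ in \eqref{e1.4}.

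It then remains to prove \eqref{e2.22}. For each fixed $k$ and each $i\ge k$, \eqref{e2.21} gives $u^{i}(x,t+\tau_{k})=u^{i}(x,t)+v^{\infty}\cdot\tau_{k}$; letting $i\to\infty$ along the subsequence yields $u^{\infty}(x,t+\tau_{k})=u^{\infty}(x,t)+v^{\infty}\cdot\tau_{k}$ for every $k$ and every $(x,t)\in\bar\Omega\times\{t\ge 0\}$, and iterating this gives the same identity with $\tau_{k}$ replaced by $m\tau_{k}$ for any $m\in Z^{+}$. Since $p_{k}\to\infty$, we have $\tau_{k}=\frac{1}{Np_{k}}\to 0$, so the set $S=\bigcup_{k\ge 0}\{m\tau_{k}:m\in Z^{+}\}$ is dense in $[0,+\infty)$: given $0\le a<b$, pick $k$ with $\tau_{k}<b-a$ and then some multiple of $\tau_{k}$ lies in $(a,b)$. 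Finally, for an arbitrary $\tau\ge 0$, choosing $\tau^{(j)}\in S$ with $\tau^{(j)}\to\tau$ and letting $j\to\infty$ in $u^{\infty}(x,t+\tau^{(j)})=u^{\infty}(x,t)+v^{\infty}\cdot\tau^{(j)}$, the continuity of $u^{\infty}$ in $t$ gives \eqref{e2.22}. The step I expect to be the main obstacle is the first one: obtaining compactness of $\{u^{i}\}$ uniformly in $i$ and confirming that the limit keeps both the equation \eqref{e2.8} and all three a priori estimates; once that is secured, the density-plus-continuity passage from $\{\tau_{k}\}$ to arbitrary $\tau$ is straightforward.
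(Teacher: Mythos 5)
Your proposal is correct and follows essentially the same route as the paper: a diagonal Arzel\`a--Ascoli extraction from $\{u^{i}\}$ whose limit inherits \eqref{e2.8} and the estimates \eqref{e1.4}, \eqref{e1.40}, \eqref{e1.5}, followed by passing \eqref{e2.21} to the limit and upgrading from the discrete steps $m\tau_{k}$ to all $\tau\ge 0$ by density and continuity in $t$ (the paper phrases the dense set as ratios of primes, you as $\bigcup_{k}\{m\tau_{k}\}$, which is immaterial). Your version is in fact somewhat more careful about the uniform-in-$i$ compactness and the continuity step than the paper's terse write-up.
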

\begin{proof}
By $Arzel\grave{a}-Ascoli$ theorem, we consider a diagonal sequence of \{i\} (again denoted by itself), such that
\begin{equation}\label{e2.23}
\begin{aligned}
 &\lim_{i\rightarrow+\infty}\|u^{i}(\cdot,t)-u^{\infty}(\cdot,t)\|_{C^{1+\zeta}(\bar{\Omega})}=0,\quad \forall\, t\geq 0,\\
 &\lim_{i\rightarrow+\infty}\|u^{i}(\cdot,t)-u^{\infty}(\cdot,t)\|_{C^{2}(\bar{D})}=0,\quad\forall\, t\geq 0,\\
 \end{aligned}
\end{equation}
for some function $u^{\infty}$ satisfies (\ref{e2.8}), (\ref{e1.4}), (\ref{e1.40})and (\ref{e1.5}).

By (\ref{e2.21}) in Lemma \ref{l2.8}, we get
\begin{equation*}
\begin{aligned}
&u^{\infty}(x,t+\tau_{k})=u^{\infty}(x,t)+v^{\infty}\cdot \tau_{k}~(k=0,1,\cdots,)\\
&\text{for} \quad \text{any}\quad (x,t)\in \bar{\Omega}\times \{t|t\geq 0\}.
\end{aligned}
\end{equation*}
It is easy to see that
\begin{equation*}
\begin{aligned}
&u^{\infty}(x,t+m\tau_{k})=u^{\infty}(x,t)+v^{\infty}\cdot m\tau_{k}~(k=0,1,\cdots,)\\
&\text{for} \quad \text{any}\quad (x,t)\in \bar{\Omega}\times \{t|t\geq 0\},\,\,m\in Z^{+}.
\end{aligned}
\end{equation*}
Then
\begin{equation*}
\begin{aligned}
&u^{\infty}(x,t+q)=u^{\infty}(x,t)+v^{\infty}\cdot q~(k=0,1,\cdots,)\\
&\text{for} \quad \text{any}\quad (x,t)\in \bar{\Omega}\times \{t|t\geq 0\},\,\,q\in Q^{+},
\end{aligned}
\end{equation*}
where $ Q^{+}=\{q\mid q=\frac{p_{i}}{p_{j}},\,\,p_{i}, p_{j}\in P\}.$
By the denseness of $Q^{+}$ in $\{\tau\geq 0\}$, we obtain the desired results.
\end{proof}
\begin{lemma}\label{l2.10}
\begin{equation*}
\lim_{t\rightarrow+\infty}\|u(\cdot,t)-u^{\infty}(\cdot,t)\|_{C(\bar{\Omega})}=0.
\end{equation*}
\end{lemma}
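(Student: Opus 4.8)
The plan is to establish something sharper than the bare statement: that every subsequential limit, as $s\to+\infty$, of the normalized translates $u(\cdot,\cdot+s)-u(x_{0},s)$ coincides with $u^{\infty}$, and then to improve the ensuing decay of the oscillation to genuine convergence in $C(\bar\Omega)$. Throughout I write $w=u-u^{\infty}$, and I recall that $u^{\infty}(x_{0},0)=0$ by the construction in Lemmas \ref{l2.5}--\ref{l2.9}.

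First I would fix $t_{n}\to+\infty$ and, via Lemma \ref{l2.5}, pass to a subsequence along which $u(x,t+t_{n})-u(x_{0},t_{n})$ converges, in $C^{1+\zeta}(\bar\Omega)$ and in $C^{2}(\bar D)$ for every $D\subset\subset\Omega$, to a function $\hat u$ solving \eqref{e2.8} together with the estimates \eqref{e1.4}, \eqref{e1.40}, \eqref{e1.5}. I would then check that $\hat u$ is itself a translating solution with the same speed $v^{\infty}$: by Lemma \ref{l2.4} and the arguments of Lemmas \ref{l2.6}--\ref{l2.8} one has $\lim_{s\to+\infty}\big(u(x,s+\tau_{k})-u(x,s)\big)=v^{\infty}\tau_{k}$ uniformly in $x$ for each $\tau_{k}=\frac{1}{Np_{k}}$, and letting $s=t+t_{n}\to+\infty$ gives $\hat u(x,t+\tau_{k})=\hat u(x,t)+v^{\infty}\tau_{k}$; the density argument of Lemma \ref{l2.9} then promotes this to $\hat u(x,t+\tau)=\hat u(x,t)+v^{\infty}\tau$ for all $\tau\ge0$. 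Also $\hat u(x_{0},0)=0$.

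Next I would run the comparison step, which I expect to be the heart of the matter. Since $\hat u$ and $u^{\infty}$ translate at the same speed, $\hat u-u^{\infty}$ is independent of $t$; call it $\delta(x)$, so $\delta(x_{0})=0$. The spatial profiles $\tilde{\hat u},\tilde u^{\infty}$ both solve $F(D^{2}\phi,D\phi,x)=v^{\infty}$ in $\Omega$ with $h(D\phi,x)=0$ on $\partial\Omega$, and subtracting as in \eqref{e2.6}--\eqref{e2.7} I would obtain a linear equation $a^{ij}\delta_{ij}+b^{i}\delta_{i}=0$ in $\Omega$ with $\beta^{k}\delta_{k}=0$ on $\partial\Omega$, the coefficients being the integral averages of $F^{ij}$, $\nabla_{p_{i}}F$, $h_{p_{k}}$ along the segment joining the two gradients. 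This equation is uniformly elliptic by \eqref{e1.2}, and because every gradient occurring is bounded by \eqref{e1.4} while obliqueness \eqref{e1.5} holds, $\beta^{k}\nu_{k}$ is bounded below; the strong maximum principle and Hopf's lemma then force $\delta$ to be constant, so $\delta\equiv\delta(x_{0})=0$ and $\hat u=u^{\infty}$. Since this holds along every subsequence, $u(\cdot,s)-u(x_{0},s)\to u^{\infty}(\cdot,0)$ in $C^{1+\zeta}(\bar\Omega)$ as $s\to+\infty$; dropping the $x$-independent terms $u(x_{0},s)$ and $v^{\infty}s$ yields $\max_{\bar\Omega}w(\cdot,s)-\min_{\bar\Omega}w(\cdot,s)\to0$ and $\|Du(\cdot,s)-Du^{\infty}(\cdot,s)\|_{C(\bar\Omega)}\to0$.

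Finally I would pin down the additive constant. For $s\ge T$ sufficiently large the gradients $Du(\cdot,s)$ and $Du^{\infty}(\cdot,s)$ are close enough that, exactly as in \eqref{e2.6}--\eqref{e2.7} and the remark preceding Lemma \ref{l2.3}, $w$ solves a uniformly parabolic linear equation with uniformly oblique boundary condition on $\Omega\times[T,+\infty)$; the argument in the proof of Lemma \ref{l2.1} then shows $\max_{\bar\Omega}w(\cdot,s)$ is nonincreasing and $\min_{\bar\Omega}w(\cdot,s)$ is nondecreasing for $s\ge T$. Both are monotone and lie between $\min_{\bar\Omega}w(\cdot,T)$ and $\max_{\bar\Omega}w(\cdot,T)$, hence converge, and as their difference tends to $0$ they share a common limit $c_{\infty}$; thus $w(\cdot,s)\to c_{\infty}$ uniformly on $\bar\Omega$. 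Replacing $u^{\infty}$ by $u^{\infty}+c_{\infty}$, which is again a translating solution of \eqref{e2.8} satisfying \eqref{e1.4}, \eqref{e1.40}, \eqref{e1.5} and \eqref{e2.22}, gives $\lim_{t\to+\infty}\|u(\cdot,t)-u^{\infty}(\cdot,t)\|_{C(\bar\Omega)}=0$. The main obstacle is the third step: verifying that a subsequential limit is a genuine translating solution carrying the universal speed $v^{\infty}$ and that the comparison principle truly applies to $\delta$, the delicate point being that the averaged obliqueness coefficient $\beta^{k}\nu_{k}$ remains bounded below along the segment connecting two a priori unrelated gradients; once $\max_{\bar\Omega}w(\cdot,s)-\min_{\bar\Omega}w(\cdot,s)\to0$ is in hand, the identification of $c_{\infty}$ is routine.
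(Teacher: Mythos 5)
Your argument is correct, and it runs on the same engine as the paper's (very terse) proof, but it is organized differently and fills in a step the paper leaves implicit, so a comparison is worthwhile. The paper simply says: set $W=u-u^{\infty}$ and rerun the oscillation argument of Lemma \ref{l2.4} --- i.e.\ $W$ solves the linearized oblique problem, $osc_{W}(t)$ decreases, and if its limit were $\epsilon>0$ a subsequential limit would have constant positive oscillation, contradicting the strong maximum principle and Hopf's lemma. You instead identify \emph{every} subsequential limit $\hat u$ of the normalized translates with $u^{\infty}$ outright, exploiting that two translating solutions of \eqref{e2.8} with the same speed differ by a time-independent $\delta$ that the elliptic maximum principle forces to be constant and the normalization $\delta(x_{0})=0$ forces to vanish; this yields full-family convergence of $u(\cdot,s)-u(x_{0},s)$ at the cost of verifying that $\hat u$ translates with speed $v^{\infty}$, which you correctly reduce to the prime-denominator/additivity argument of Lemmas \ref{l2.6}--\ref{l2.8} applied to $u$ itself (the paper only states this for the iterated limits $u^{i}$, but the telescoping argument you invoke does give $\lim_{s\to\infty}(u(x,s+\tau_{k})-u(x,s))=v^{\infty}\tau_{k}$ directly). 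Your final step --- monotonicity of $\max_{\bar\Omega}W$ and $\min_{\bar\Omega}W$ forcing a common limit $c_{\infty}$, which is then absorbed into $u^{\infty}$ --- is precisely what is needed to upgrade $osc_{W}(t)\to0$ to convergence in $C(\bar\Omega)$, and the paper's one-line proof tacitly relies on the same adjustment, so making it explicit is a genuine improvement. One remark: the point you flag as the main obstacle, the lower bound on $\beta^{k}\nu_{k}$ along the segment joining the unrelated gradients $D\hat u$ and $Du^{\infty}$, is in fact not delicate: by \eqref{e1.3} the quantity $h_{p_{k}}(p,x)\nu_{k}$ is positive for every $p$, hence bounded below on the compact set $\{|p|\le C_{1}\}\times\partial\Omega$ since $h$ is $C^{1}$, and by \eqref{e1.4} every convex combination of the two gradients stays in that set; the smallness of $t_{0}$ in the paper serves only to deduce the bound from \eqref{e1.5} alone, and is not needed here.
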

\begin{proof}
 Let
\begin{equation*}
W(x,t)=u(x,t)-u^{\infty}(x,t).
\end{equation*}
If we replace $u(x,t+t_{0})$ by $u^{\infty}(x,t)$ and repeat the arguments in  Lemma \ref{l2.4}, then we have the desired results.
\end{proof}

\section{Proof of Theorem \ref{t1.1}}
In this section, we give the proof of Theorem \ref{t1.1} by using the preliminary results which we establish in section 3.

We first give an estimates which will be used in the following proof.
 By intermediate Schauder estimates for parabolic
equations (cf. Lemma 14.6  and Proposition 4.25 in \cite{GM}), for any $D\subset\subset \Omega$, we have
\begin{equation}\label{e4.209}
\begin{aligned}
&\sup_{t\geq 1}\|D^{3}u(\cdot,t)\|_{C(\bar{D})}+\sup_{t\geq 1}\|D^{4}u(\cdot,t)\|_{C(\bar{D})}\\
&+\sup_{x_{i}\in D, t_{i}\geq 1}\frac{|D^{4}u(x_{1}, t_{1})-D^{4}u(x_{2}, t_{2})}{\max\{|x_{1}-x_{2}|^{\alpha},|t_{1}-t_{2}|^{\frac{\alpha}{2}}\}}\\
&\leq C_{4},
\end{aligned}
\end{equation}
where $ C_{4}$ is a constant depending on the known data and dist$(\partial \Omega, \partial D)$.

Now fixing some positive $t_0$ and writing $$w(x,t)=u(x,t)-u(x,t+t_0),$$ then we see that $w(x,t)$ satisfies equation \eqref{e2.1}.
By \eqref{e1.4} and \eqref{e1.40}, we know the equation \eqref{e2.1} are uniformly parabolic.
Since $h_{p_k}(x,Du(x,t))\nu_k\geq C>0$ for some $C$ uniform in $t$ and $x$.
Therefore we can choose $t_0$ small enough to ensure that $\alpha^k\nu_k\geq \frac{C}{2}>0$
and we see that $w$ satisfies a linear, uniformly oblique boundary condition.

By Lemma \ref{l2.1} to Lemma \ref{l2.9} in Section 2, we know the uniform parabolicity of equation \eqref{e2.1} ensures that
we can obtain a translating solution of the same regularity as $u$ : $u^{\infty}(x,t)=\tilde{u}^\infty(x)+C_{\infty}\cdot t$
for some constant $C_{\infty}$ which satisfies equation \eqref{e1.1}.
By Lemma \ref{l2.10},
 $u^{\infty}(x,t)$ satisfies
\begin{equation}\label{e3.1}
\lim_{t\rightarrow+\infty}\|u(\cdot,t)-u^{\infty}(\cdot,t)\|_{C(\bar{\Omega})}=0.
\end{equation}
By a result of Maz'ya-Shaposhnikova \cite{MS} on page 143 , For any $0<\alpha<1$, we have the following interpolation inequality
\begin{equation*}
|D u(x)|^{\alpha+1}\leq \frac{n(n+1)^{\alpha+1}(\alpha+1)^{\alpha+1}}{(n+\alpha)(n+\alpha+1)\alpha^{\alpha}}
(M^0(x))^{\alpha}\sup_y\frac{|D u(y)-D u(x)|}{|y-x|^{\alpha}},
\end{equation*}
where $M^0 u(x)=\sup_{\tau>0} \frac{1}{2\tau} |\int_{x-\tau}^{x+\tau} sign(x-y) u(y) dy |$ is the maximal operator.
It easily yields that for any $0<\alpha<1$,
\begin{equation*}
\|D u\|_{C(\bar{\Omega})}\leq \frac{n^{\frac{1}{\alpha+1}}(n+1)(\alpha+1)}{(n+\alpha)^{\frac{1}{\alpha+1}}(n+\alpha+1)^{\frac{1}{\alpha+1}}\alpha^{\frac{\alpha}{\alpha+1}}}
\|u\|_{C(\bar{\Omega})}^{\frac{\alpha}{\alpha+1}}\|D u \|^{\frac{1}{\alpha+1}}_{C^{\alpha}(\bar{\Omega})}.
\end{equation*}
Therefore by replacing $u(\cdot)$ with $u(\cdot,t)-u^{\infty}(\cdot,t)$ in the above inequality, we obtain
\begin{equation}\label{e3.2}
\begin{aligned}
& \|D(u(\cdot,t)-u^{\infty}(\cdot,t))\|_{C(\bar{\Omega})}\\
\leq & C(n,\alpha) \|u(\cdot,t)-u^{\infty}(\cdot,t)\|_{C(\bar{\Omega})}^{\frac{\alpha}{\alpha+1}}\|D (u(\cdot,t) - u^{\infty}(\cdot,t)) \|^{\frac{1}{\alpha+1}}_{C^{\alpha}(\bar{\Omega})}\\
\leq &  C(n,\alpha) \|u(\cdot,t)-u^{\infty}(\cdot,t)\|_{C(\bar{\Omega})}^{\frac{\alpha}{\alpha+1}}\|D^2 ( u(\cdot,t)- u^{\infty}(\cdot,t)) \|^{\frac{1}{\alpha+1}}_{C {(\bar{\Omega})}}.
\end{aligned}
\end{equation}


 From equation \eqref{e3.2} and the estimates \eqref{e1.40}, we obtain
\begin{equation}\label{e3.3}
\lim_{t\rightarrow+\infty}\|u(\cdot,t)-u^{\infty}(\cdot,t)\|_{C^{1}(\bar{\Omega})}=0.
\end{equation}
By \eqref{e1.4}, the interpolation inequalities  and (\ref{e4.209}), we get
\begin{equation*}\label{e3.4}
\begin{aligned}
&\|D^{2}(u(\cdot,t)-u^{\infty}(\cdot,t))\|^{2}_{C(\bar{D})}\\
&\leq c(D)\|D(u(\cdot,t)-u^{\infty}(\cdot,t))\|_{C(\bar{D})}(\|D^{3}(u(\cdot,t)-u^{\infty}(\cdot,t))\|_{C(\bar{D})}\\
&+\|D^{2}(u(\cdot,t)-u^{\infty}(\cdot,t))\|_{C(\bar{D})})\\
&\leq c(D)\|D(u(\cdot,t)-u^{\infty}(\cdot,t))\|_{C(\bar{D})}(2C_{4}+2C_{1}).
\end{aligned}
\end{equation*}
Using (\ref{e3.3}),  we also obtain
\begin{equation}\label{e3.5}
\lim_{t\rightarrow+\infty}\|u(\cdot,t)-u^{\infty}(\cdot,t)\|_{C^{2}(\bar{D})}=0.
\end{equation}
Repeating the above procedure and using (\ref{e4.209}),  we have
\begin{equation}\label{e3.6}
\lim_{t\rightarrow+\infty}\|u(\cdot,t)-u^{\infty}(\cdot,t)\|_{C^{4+\alpha'}(\bar{D})}=0.
\end{equation}
By \eqref{e1.4} and (\ref{e3.5}), we get
$$\lim_{t\rightarrow+\infty}\|u(\cdot,t)-u^{\infty}(\cdot,t)\|_{C^{1+\zeta}(\bar{\Omega})}=0.$$
Therefore by using the equation \eqref{e1.1} and letting $t\rightarrow\infty$, we obtain
 \begin{equation*}
C_{\infty} =\frac{\partial u^{\infty}(x,t)}{\partial t}= F(D^2u^{\infty}(x,t),Du^{\infty}(x,t),x)=F(D^2u^{\infty}(x),Du^{\infty}(x),x),
 \end{equation*}
 $$0=\lim_{t\rightarrow\infty}h(Du(x,t),x)=\lim_{t\rightarrow\infty}h(Du^{\infty}(x,t),x)=h(Du^{\infty}(x),x).$$
Then the proof of Theorem \ref{t1.1} is completed.
\qed

\section{Application of Theorem \ref{t1.1} to the convergence of special Lagrangian graphs}

In this section, we will show the applications of  Theorem \ref{t1.1} to study the existence of a family of special Lagrangian graphs.
We use the parabolic framework which consider the corresponding parabolic flows and show that
the flows converge to special Lagrangian graphs by using the convergence result.
The key steps are to verify the a-priori estimates \eqref{e1.4}, \eqref{e1.40} and the strict obliqueness estimates
of the boundary condition \eqref{e1.5}.

Elliptic methods have been successfully used to study the existence of fully-nonlinear equations with second boundary value conditions.
Elliptic Monge-Amp$\grave{e}$re equations and Hessian equations with Neumann boundary and oblique boundary conditions are solved in
\cite{P} and \cite{JU} respectively. In \cite{SM}, Brendle and Warren used the continuity method
to solve fully nonlinear elliptic equations with second boundary condition
where they obtained the existence and uniqueness of a special Lagrangian graph.

Let $\Omega$, $\tilde{\Omega}$ be two uniformly convex bounded
domains with smooth boundary in  $\mathbb{R}^{n}$  and  $a=\cot\tau$, $b=\sqrt{|\cot^{2}\tau-1|}$ for $\tau \in (0,\frac{\pi}{4}) \cup (\frac{\pi}{4},\frac{\pi}{2})$.
Here we consider the minimal Lagrangian diffeomorphism problem \cite{RS} which  is equivalent to
the following fully nonlinear elliptic equations with second boundary condition:
\begin{equation}\label{e4.1}
\left\{ \begin{aligned}F_{\tau}(D^{2}u)&=c,
&  x\in \Omega, \\
Du(\Omega)&=\tilde{\Omega},
\end{aligned} \right.
\end{equation}
where
$$F_{\tau}(A)=\left\{ \begin{aligned}
&\sum_{i}\ln\lambda_{i} , \quad
\qquad\quad\qquad\qquad\quad\quad\qquad\quad  \tau=0, \\
& \sum_{i}\ln(\frac{\lambda_{i}+a-b}{\lambda_{i}+a+b}) \qquad\qquad\quad\quad\qquad\quad 0<\tau<\frac{\pi}{4},\\
& -\sum_{i}\frac{1}{1+\lambda_{i}}, \qquad \qquad\qquad \qquad\qquad \qquad\tau=\frac{\pi}{4},\\
& \sum_{i}\arctan(\frac{\lambda_{i}+a-b}{\lambda_{i}+a+b}), \qquad \qquad\qquad \quad\frac{\pi}{4}<\tau<\frac{\pi}{2},\\
& \sum_{i}\arctan\lambda_{i}, \,\,\quad \qquad\qquad \qquad\qquad \qquad\tau=\frac{\pi}{2},
\end{aligned} \right.$$
$\lambda_i ~(1\leq i\leq n)$ are the eigenvalues of the Hessian $D^2 u$ and $Du$ are diffeomorphism from $\Omega$ to $\tilde{\Omega}$.
The existence and uniqueness of these problems have been obtained by elliptic methods in \cite{SM}, \cite{HRO} and \cite{MW}.
As in \cite{HR} and \cite{HRY}, we consider the corresponding parabolic type special Lagrangian equations and use parabolic methods to solve problem \eqref{e4.1}.
We settle the longtime existence and convergence of smooth solutions
for the following second boundary value problem to parabolic type special Lagrangian equations
\begin{equation}\label{e4.2}
\left\{ \begin{aligned}\frac{\partial u}{\partial t}&=F_{\tau}(D^{2}u),
& t>0,\quad x\in \Omega, \\
Du(\Omega)&=\tilde{\Omega}, &t>0,\qquad\qquad\\
 u&=u_{0}, & t=0,\quad x\in \Omega.
\end{aligned} \right.
\end{equation}

We will use Theorem \ref{t1.1} to prove the solutions of the
above special Lagrangian equations converging to those of problem \eqref{e4.1}. We consider different cases with respect to $\tau$.

In \cite{OK}, Schn$\ddot{\text{u}}$rer and Smoczyk studied the second boundary value problems for the Hessian and Gauss curvature flows.
They showed the convergence of the Hessian and Gauss curvature flow to the prescribed Gauss curvature equations.
We can use Theorem \ref{t1.1} to reprove part of their results.
For simplicity, we only give the proof of the special case corresponding to $\tau=0$ in equation \eqref{e4.1} and \eqref{e4.2}.
\begin{Example}
Assume that $\Omega$, $\tilde{\Omega}$ are bounded, uniformly convex domains with smooth boundary in $\mathbb{R}^{n}$, $0<\alpha<1$.
  Then for any given initial function $u_{0}\in C^{2+\alpha}(\bar{\Omega})$
  which is uniformly convex and satisfies $Du_{0}(\Omega)=\tilde{\Omega}$,  the  strictly convex solution of the following problem
  \begin{equation}\label{e4.3}
\left\{ \begin{aligned}\frac{\partial u}{\partial t}&= \ln \det D^2 u ,
& t>0,\quad x\in \Omega, \\
Du(\Omega)& = \tilde{\Omega}, &t>0,\qquad\qquad\\
 u& = u_{0}, & t=0,\quad x\in \Omega.
\end{aligned} \right.
\end{equation}
   exists for all $t\geq 0$ and $u(\cdot,t)$ converges to a function $u^{\infty}(x,t)=\tilde{u}^\infty(x)+C_{\infty}\cdot t$ in $C^{1+\zeta}(\bar{\Omega})\cap C^{4+\alpha}(\bar{D})$ as $t\rightarrow\infty$
  for any $D\subset\subset\Omega$, $\zeta<1$, that is,
 $$\lim_{t\rightarrow+\infty}\|u(\cdot,t)-u^{\infty}(\cdot,t)\|_{C^{1+\zeta}(\bar{\Omega})}=0,\qquad
  \lim_{t\rightarrow+\infty}\|u(\cdot,t)-u^{\infty}(\cdot,t)\|_{C^{4+\alpha}(\bar{D})}=0.$$
And $\tilde{u}^{\infty}(x)\in C^{\infty}(\bar{\Omega})$ is a solution of
\begin{equation}\label{e4.4}
\left\{ \begin{aligned} \ln \det D^2 u & = C_{\infty},
&  x\in \Omega, \\
Du(\Omega)&=\tilde{\Omega}.
\end{aligned} \right.
\end{equation}
The constant $C_{\infty}$ depends only on $\Omega$, $\tilde{\Omega}$ and $F$. The solution to (\ref{e4.4}) is unique up to additions of constants.
\end{Example}
\begin{proof}
From section $7$ and section $9$ in \cite{OK}, Schn$\ddot{\text{u}}$rer and Smoczyk had established estimates \eqref{e1.4}.
Since the parabolic operator is concave with with respect to the $D^2 u$ variables, by Remark \ref{r1.1}, we know \eqref{e1.40} holds.
By Lemma 8.1 in section 8 of \cite{OK}, we get the strict obliqueness estimates \eqref{e1.5}. Therefore, by Theorem \ref{t1.1},
we obtain the desired convergence results.
\end{proof}

In \cite{SM}, Brendle and Warren obtained the existence and uniqueness of  special Lagrangian graph for the special case $\tau=\frac{\pi}{2}$. In the following example,
we give an alternative proof which shows the applications of Theorem \ref{t1.1} to the existence of solutions for the equations (\ref{e4.1}) when $\tau=\frac{\pi}{2}$.

\begin{Example}
Assume that $\Omega$, $\tilde{\Omega}$ are bounded, uniformly convex domains with smooth boundary in $\mathbb{R}^{n}$, $0<\alpha<1$.
  Then for any given initial function $u_{0}\in C^{2+\alpha}(\bar{\Omega})$
  which is   uniformly convex and satisfies $Du_{0}(\Omega)=\tilde{\Omega}$,  the  strictly convex solution of the following problem
  \begin{equation}\label{e4.5}
\left\{ \begin{aligned}\frac{\partial u}{\partial t}&= \sum_{i=1}^n arctan \lambda_i,
& t>0,\quad x\in \Omega, \\
Du(\Omega)& = \tilde{\Omega}, &t>0,\qquad\qquad\\
 u& = u_{0}, & t=0,\quad x\in \Omega.
\end{aligned} \right.
\end{equation}
   exists for all $t\geq 0$ and $u(\cdot,t)$ converges to a function $u^{\infty}(x,t)=\tilde{u}^\infty(x)+C_{\infty}\cdot t$ in $C^{1+\zeta}(\bar{\Omega})\cap C^{4+\alpha}(\bar{D})$ as $t\rightarrow\infty$
  for any $D\subset\subset\Omega$, $\zeta<1$, that is,
 $$\lim_{t\rightarrow+\infty}\|u(\cdot,t)-u^{\infty}(\cdot,t)\|_{C^{1+\zeta}(\bar{\Omega})}=0,\qquad
  \lim_{t\rightarrow+\infty}\|u(\cdot,t)-u^{\infty}(\cdot,t)\|_{C^{4+\alpha}(\bar{D})}=0.$$
And $\tilde{u}^{\infty}(x)\in C^{\infty}(\bar{\Omega})$ is a solution of
\begin{equation}\label{e4.6}
\left\{ \begin{aligned}\sum_{i=1}^n arctan \lambda_i & = C_{\infty},
&  x\in \Omega, \\
Du(\Omega)&=\tilde{\Omega}.
\end{aligned} \right.
\end{equation}
The constant $C_{\infty}$ depends only on $\Omega$, $\tilde{\Omega}$ and $F$. The solution to (\ref{e1.3}) is unique up to additions of constants.
\end{Example}
\begin{proof}
By Lemma 3.1, Lemma 3.11 in \cite{HR} and boundary condition, we see that the estimates \eqref{e1.4} holds.
Since the parabolic operator is concave with respect to the $D^2 u$ variables, by Remark \ref{r1.1},
we obtain the estimate \eqref{e1.40}.
By Lemma 3.4 in \cite{HR}, we get the strict obliqueness estimates. By Theorem \ref{t1.1}, we obtain the desired results.
\end{proof}

Our next example is to show the existence of the special Lagrangian graphs for the general cases $0<\tau<\frac{\pi}{2}$.
We use Theorem \ref{e1.1} to show the convergence of the
corresponding parabolic flows to the translating solutions.
\begin{Example}
Assume that $\Omega$, $\tilde{\Omega}$ are bounded, uniformly convex domains with smooth boundary in $\mathbb{R}^{n}$, $0<\alpha<1$, $0<\tau<\frac{\pi}{2}$.
  Then for any given initial function $u_{0}\in C^{2+\alpha}(\bar{\Omega})$
  which is  uniformly convex and satisfies $Du_{0}(\Omega)=\tilde{\Omega}$,  the  strictly convex solution of the following problem
  \begin{equation}\label{e4.7}
\left\{ \begin{aligned}\frac{\partial u}{\partial t}&= F_{\tau}(D^{2}u),
& t>0,\quad x\in \Omega, \\
Du(\Omega)& = \tilde{\Omega}, &t>0,\qquad\qquad\\
 u& = u_{0}, & t=0,\quad x\in \Omega.
\end{aligned} \right.
\end{equation}
   exists for all $t\geq 0$ and $u(\cdot,t)$ converges to a function $u^{\infty}(x,t)=\tilde{u}^\infty(x)+C_{\infty}\cdot t$ in $C^{1+\zeta}(\bar{\Omega})\cap C^{4+\alpha}(\bar{D})$ as $t\rightarrow\infty$
  for any $D\subset\subset\Omega$, $\zeta<1$, that is,
 $$\lim_{t\rightarrow+\infty}\|u(\cdot,t)-u^{\infty}(\cdot,t)\|_{C^{1+\zeta}(\bar{\Omega})}=0,\qquad
  \lim_{t\rightarrow+\infty}\|u(\cdot,t)-u^{\infty}(\cdot,t)\|_{C^{4+\alpha}(\bar{D})}=0.$$
And $\tilde{u}^{\infty}(x)\in C^{\infty}(\bar{\Omega})$ is a solution of
\begin{equation}\label{e4.8}
\left\{ \begin{aligned}F_{\tau}(D^{2}u) & = C_{\infty},
&  x\in \Omega, \\
Du(\Omega)&=\tilde{\Omega}.
\end{aligned} \right.
\end{equation}
The constant $C_{\infty}$ depends only on $\Omega$, $\tilde{\Omega}$ and $F$. The solution to (\ref{e1.3}) is unique up to additions of constants.
\end{Example}
\begin{proof}
In recent papers \cite{HRY} and \cite{CHY}, the authors study second boundary value problems for a class of fully nonlinear flows with general parabolic
operators.
By Lemma 3.1, Lemma 3.11 in \cite{HRY} and boundary condition, we know the priori estimates \eqref{e1.4} holds.
Since the parabolic operator is concave with respect to the $D^2 u$ variables, by Remark \ref{r1.1}, we see that the estimate \eqref{e1.40} holds.
By Lemma 3.4 in \cite{HRY}, we know the strict obliqueness estimates hold. By Theorem \ref{t1.1}, we obtain the desired convergence results.
\end{proof}

\vspace{5mm}

\end{document}